\newtheorem{thm}{Theorem}[section]
\newtheorem{cor}[thm]{Corollary}
\newtheorem{lem}[thm]{Lemma}
\newtheorem{cons}[thm]{Construction}
\newtheorem{prop-def}[thm]{Proposition-Definition}
\theoremstyle{definition}
\newtheorem{defi}[thm]{Definition}
\theoremstyle{remark}
\newtheorem{rmk}[thm]{\bf Remark}
\newtheorem{exm}[thm]{\bf Example}
\numberwithin{equation}{section}
\numberwithin{figure}{section}
\def\A{\mathcal{A}}
\def\B{\mathcal{B}}
\def\C{\mathbb{C}}
\def \ep{\epsilon}
\def \HSpec{\mbox{\rm HSpec}}
\def\I{\mathcal{I}}
\def\la{\lambda}
\def \lamin{\lambda_{\min}}
\def \rank{\mbox{\rm rank}}
\def\T{\mathcal{T}}
\def\u{{\mathbf u}}
\def\v{{\mathbf v}}
\def\Z{\mathbb{Z}}
\begin{document}
\title[Non-odd-transversal hypergraph and non-odd-bipartite hypergraph]
{Minimal non-odd-transversal hypergraphs and minimal non-odd-bipartite hypergraphs}

\author[Y.-Z. Fan]{Yi-Zheng Fan$^*$}
\address{School of Mathematical Sciences, Anhui University, Hefei 230601, P. R. China}
\email{fanyz@ahu.edu.cn}
\thanks{$^*$The corresponding author.
This work was supported by National Natural Science Foundation of China (Grant No. 11871073, 11771016).}

\author[Y. Wang]{Yi Wang}
\address{School of Mathematical Sciences, Anhui University, Hefei 230601, P. R. China}
\email{wangy@ahu.edu.cn}

\author[J.-C. Wan]{Jiang-Chao Wan}
\address{School of Mathematical Sciences, Anhui University, Hefei 230601, P. R. China}
\email{wanjc@stu.ahu.edu.cn}

\date{\today}

\subjclass[2010]{05C65, 15A18}

\keywords{Hypergraph; odd-transversal; odd-bipartite; incidence matrix; tensor, least H-eigenvalue}

\begin{abstract}
Among all uniform hypergraphs with even uniformity, the odd-transversal or
odd-bipartite hypergraphs are more close to bipartite simple graphs from the viewpoint of
both structure and spectrum.
A hypergraph is called minimal non-odd-transversal if it is non-odd-transversal but deleting any edge results in an odd-transversal hypergraph.
In this paper we give an equivalent characterization of the minimal non-odd-transversal hypergraphs by the degrees and the rank of its incidence matrix over $\Z_2$.
If a minimal non-odd-transversal hypergraph is uniform, then it has even uniformity, and hence is minimal non-odd-bipartite.
We characterize $2$-regular uniform  minimal non-odd-bipartite hypergraphs, and give some examples of $d$-regular uniform hypergraphs which are minimal non-odd-bipartite.
Finally we give upper bounds for the least H-eigenvalue of the adjacency tensor of minimal non-odd-bipartite hypergraphs.
\end{abstract}

\maketitle

\section{Introduction}
Let $G=(V,E)$ be a hypergraph, where $V=:V(G)$ is the vertex set, and $E=:E(G)$ is the edge set whose elements $e \subseteq V$.
If for each edge $e$ of $G$, $|e|=k$, then $G$ is called a \emph{$k$-uniform} hypergraph.
The \emph{degree} $d(v)$ of a vertex $v$ of $G$ is defined to be the number of edges of $G$ containing $v$.
If $d(v)=d$ for all vertices $v$ of $G$, then $G$ is called \emph{$d$-regular}.

A hypergraph $G$ is called \emph{$2$-colorable} if there exists a $2$-coloring of the vertices of $V(G)$ such that $G$ contains no monochromatic edges;
and it is called \emph{minimal non-$2$-colorable} if it is non-$2$-colorable but deleting any edge from $E(G)$ results in a $2$-colorable hypergraph.
Seymour \cite{Sey} proved that if $G$ is minimal non-$2$-colorable and $V(G)=\cup\{e \in E(G)\}$, then $|E(G)| \ge |V(G)|$.
Aharoni and Linial \cite{AhLi} presented an infinite version of Seymour's result.
Alon and Bregman \cite{AlBr} proved that if $k \ge 8$ then
every $k$-regular $k$-uniform hypergraph is $2$-colorable.
Henninga and  Yeoa \cite{HeYeEJC} showed that Alon-Bergman result is true for $k \ge 4$.

A subset $U$ of $V(G)$ is called a \emph{transversal} (also called \emph{vertex cover} or \emph{hitting set}) of $G$
  if each edge of $G$ has a nonempty intersection with $U$.
The \emph{transversal number} of $G$ is the minimum size of transversals in $G$,
  which was well studied by Alon \cite{Alon}, Chv\'{a}tal and McDiarmid \cite{ChMc}, Henninga and Yeo \cite{HeYeDM}.
$G$ is called \emph{bipartite} if for some nonempty proper subset $U \subseteq V(G)$, $U$ and its complement $U^c$ are both transversal;
or equivalently the vertex set $V(G)$ has a bipartition into two parts such that every edge of $E(G)$ intersects both parts.
Surely, $G$ is bipartite if and only if $G$ is 2-colorable.

A subset $U$ of $V(G)$ is called an \emph{odd transversal} of $G$ if each edge of $G$ intersects $U$ in an odd number of vertices \cite{CH, RS}.
A hypergraph $G$ is called \emph{odd-transversal} if it has an odd transversal.
Nikiforov \cite{Ni0} firstly uses odd transversal to investigate the spectral symmetry of tensors and hypergraphs.
Hu and Qi \cite{HQ} introduce the notion of odd-bipartite hypergraph to study the zero eigenvalue of the signless Laplacian tensor.

\begin{defi}[\cite{HQ}]
Let $G$ be a $k$-uniform hypergraph $G$, where $k$ is even.
If there exists a bipartition $\{U, U^c\}$ of $V(G)$ has such that each edge of $G$ intersects $U$ (and also $U^c$) in an odd number of vertices, then $G$ is called \emph{odd-bipartite}, and $\{U,U^c\}$ is an \emph{odd-bipartition} of $G$.
\end{defi}

So, odd-bipartite hypergraphs are surely odd-transversal hypergraphs and bipartite hypergraphs.
For the uniform hypergraphs with even uniformity,
the notion of odd-bipartite hypergraphs is \emph{equivalent to} that of odd-transversal hypergraphs.

From the viewpoint of spectrum, a simple graph is bipartite  if and only if its adjacency matrix has a symmetric spectrum.
However, the adjacency tensor of a bipartite uniform hypergraph does not possess such property.
We note that the hypergraphs under consideration are uniform when discussing their spectra.
Shao et al. \cite{SSW} proved that the adjacency tensor of a $k$-uniform hypergraph $G$ has a symmetric H-spectrum if and only if $k$ is even and $G$ is odd-bipartite.
So, the odd-bipartite hypergraphs are more close to bipartite simple graphs than the bipartite hypergraphs based on the following two reasons.
First they both have a structural property, namely, there exists a bipartition of the vertex set such that
  every edge intersects the each part of the bipartition in an odd number of vertices.
Second they both have a symmetric H-spectrum.

There are some examples of odd-bipartite hypergraphs, e.g. power of simple graphs and cored hypergraphs \cite{HQS}, hm-hypergraphs \cite{HQ},
$m$-partite $m$-uniform hypergraphs \cite{CD}.
Nikiforov \cite{Ni} gives two classes of non-odd-transversal hypergraphs.
Fan et al. \cite{KF} construct non-odd-bipartite generalized power hypergraphs from non-bipartite simple graphs.
It is known that a connected bipartite simple graph has a unique bipartition up to isomorphism.
However, an odd-bipartite hypergraph can have more than one odd-bipartition.
Fan et al. \cite{FY} given a explicit formula for the number of odd-bipartition of a hypergraph by the rank of its incidence matrix over $\Z_2$.
So, it seems hard to give examples of non-odd-bipartite hypergraphs.

To our knowledge, there is no characterization of non-odd-transversal or non-odd-bipartite hypergraphs.
We observe that non-odd-transversal hypergraphs have a hereditary property, that is,
if $G$ contains a  non-odd-transversal sub-hypergraph, then $G$ is non-odd-transversal.
$G$ is called \emph{minimal non-odd-transversal},
if $G$ is non-odd-transversal but deleting any edge from $G$ results in an odd-transversal hypergraph,
or equivalently, any nonempty proper edge-induced sub-hypergraph of $G$ is odd-transversal.
In this paper we give an equivalent characterization of the minimal non-odd-transversal hypergraphs by the degrees and the rank of its incidence matrix over $\Z_2$.
If a minimal non-odd-transversal hypergraph is uniform, then it has  even uniformity, and hence is minimal non-odd-bipartite.
We characterized $2$-regular uniform  minimal non-odd-bipartite hypergraphs, and give some examples of $d$-regular uniform hypergraphs which are minimal non-odd-bipartite.
Finally we give upper bounds for the least H-eigenvalue of the adjacency tensor of minimal non-odd-bipartite hypergraphs.

\section{Basic notions}
Unless specified somewhere, all hypergraphs in this paper contain no multiple edges or isolated vertices,
where vertex is called \emph{isolated } if it is not contained in any edge of the hypergraph.
Let $G=(V,E)$ be a hypergraph.
$G$ is called \emph{square} if $|V|=|E|$.
A \emph{walk} of length $t$ in $G$ is a sequence of alternate vertices and edges: $v_{0}e_{1}v_{1}e_{2}\ldots e_{t}v_{t}$,
    where $\{v_{i},v_{i+1}\}\subseteq e_{i}$ for $i=0,1,\ldots,t-1$.
$G$ is said to be \emph{connected} if every two vertices are connected by a walk.

The \emph{vertex-induced sub-hypergraph} of $G$ by the a subset $U \subseteq V(G)$, denoted by $G|_U$,
  is a hypergraph with vertex set $U$ and edge set $\{e \cap U: e \in E(G), e \cap U \ne \emptyset\}$.
For a connected hypergraph $G$, a vertex $v$ is called a \emph{cut vertex} of $G$ if $G|_{V(G)\backslash \{v\}}$ is disconnected.
The \emph{edge-induced sub-hypergraph} of $G$ by a subset $F \subseteq E(G)$, denoted by $G|_F$, is a hypergraph with vertex set $\cup_{e \in F}e$ and edge set $F$.

Let $G$ be a hypergraph and let $e$ be an edge of $G$.
Denote by $G-e$ the hypergraph obtained from $G$ by deleting the edge $e$ from $E(G)$.
For a connected hypergraph $G$, an edge $e$ is called a \emph{cut edge} of $G$ if $G-e$ is disconnected.

A \emph{matching} $M$ of $G$ is a set of pairwise disjoint edges of $G$.
In particular, if $G$ is bipartite simple graph with a bipartition $\{V_1,V_2\}$,
a vertex subset $U_1 \subseteq V_1$ is \emph{matched to} $U_2 \subseteq V_2$ in $M$,
if there exists a bijection $f: U_1 \to U_2$ such that $\{\{v,f(v)\}: v \in U_1\} \subseteq M$.
A subset $U$ of $V_1$ (or $V_2$) is \emph{matched by} $M$ if every vertex of $U$ is incident with an edge of $M$.
$M$ is called a \emph{perfect matching} if $V_1$ and $V_2$ are both matched by $M$.

The \emph{incidence bipartite graph} $\Gamma_G$ of $G$ is a bipartite simple graph with two parts $V(G)$ and $E(G)$ such that
$\{v, e\} \in E(\Gamma_G)$ if and only if $v \in e$.

The edge-vertex \emph{incidence matrix} of $G$, denoted by $B_G=(b_{e,v})$, is a matrix of size $|E(G)| \times |V(G)|$,
whose entries $b_{e,v}=1$ if $v \in e$, and $b_{e,v}=0$ otherwise.

The \emph{dual} of $G$, denoted by $G^\ast$, is the hypergraph whose vertex set is $E(G)$ and edge set is $\{\{e \in E(G): v \in e\}: v \in V(G)\}$.
If no two vertices of $G$ are contained in precisely the same edges of $G$, then $(G^\ast)^\ast$ is isomorphic to $G$.
In this situation, the incidence bipartite graph $\Gamma_G \cong \Gamma_{G^\ast}$, and the incidence matrix $B_G=B^\top_{G^\ast}$,
where the latter denotes the transpose of $B_{G^\ast}$.

Let $G$ be a simple graph, and let $k$ be even integer greater than $2$.
Denote by $G^{k, {k \over 2}}$ the hypergraph obtained from $G$ whose vertex set is $\cup_{v \in V(G)} \v$ and edge set
$\{ \u \cup \v: \{u,v\} \in E(G)\}$, where $\v$ denotes an ${k \over 2}$-set corresponding to $v$, and all those sets are pairwise disjoint;
intuitively  $G^{k, {k \over 2}}$ is obtained from $G$ by blowing up each vertex into an ${k \over 2}$-set and preserving the adjacency relation \cite{KF}.
It is proved that $G^{k, {k \over 2}}$ is non-odd-bipartite if and only if $G$ is non-bipartite \cite{KF}.

Next we will introduce some knowledge of eigenvalues of a tensor.
For integers $k\geq 2$ and $n\geq 2$,
  a \emph{tensor} (also called \emph{hypermatrix}) $\mathcal{T}=(t_{i_{1}\ldots i_{k}})$ of order $k$ and dimension $n$ refers to a
  multidimensional array $t_{i_{1}i_2\ldots i_{k}}$ such that $t_{i_{1}i_2\ldots i_{k}}\in \mathbb{C}$ for all $i_{j}\in [n]:=\{1,2,\ldots,n\}$ and $j\in [k]$.
$\mathcal{T}$ is called \textit{symmetric} if its entries are invariant under any permutation of their indices.

 Given a vector $x\in \mathbb{C}^{n}$, $\mathcal{T}x^{k} \in \mathbb{C}$, and $\mathcal{T}x^{k-1} \in \mathbb{C}^{n}$, which are defined as follows:
   $$\mathcal{T}x^{k}=\sum_{i_1,i_{2},\ldots,i_{k}\in [n]}t_{i_1i_{2}\ldots i_{k}}x_{i_1}x_{i_{2}}\cdots x_{i_k},$$
   $$   (\mathcal{T}x^{k-1})_i=\sum_{i_{2},\ldots,i_{k}\in [n]}t_{ii_{2}i_3\ldots i_{k}}x_{i_{2}}x_{i_3}\cdots x_{i_k}, \mbox{~for~} i \in [n].$$
 Let $\mathcal{I}$ be the {\it identity tensor} of order $k$ and dimension $n$, that is, $i_{i_{1}i_2 \ldots i_{k}}=1$ if and only if
   $i_{1}=i_2=\cdots=i_{k} \in [n]$ and zero otherwise.

\begin{defi}{\em\cite{Lim,Qi}} Let $\mathcal{T}$ be a $k$-th order $n$-dimensional real tensor.
For some $\lambda \in \mathbb{C}$, if the polynomial system $(\lambda \mathcal{I}-\mathcal{T})x^{k-1}=0$, or equivalently $\mathcal{T}x^{k-1}=\lambda x^{[k-1]}$, has a solution $x\in \mathbb{C}^{n}\backslash \{0\}$,
then $\lambda $ is called an eigenvalue of $\mathcal{T}$ and $x$ is an eigenvector of $\mathcal{T}$ associated with $\lambda$,
where $x^{[k-1]}:=(x_1^{k-1}, x_2^{k-1},\ldots,x_n^{k-1}) \in \mathbb{C}^n$.
\end{defi}

The \emph{characteristic polynomial} $\varphi_\T(\la)$ of $\T$ is defined as the resultant of the polynomials $(\la \I-\T)x^{k-1}$; see \cite{Qi,CPZ2,Ha}.
It is known that $\la$ is an eigenvalue of $\T$ if and only if it is a root of $\varphi_\T(\la)$.
The \emph{spectrum} of $\T$ is the multi-set of the roots of $\varphi_\T(\la)$.

Suppose that $\T$ is real.
If $x$ is a real eigenvector of $\mathcal{T}$, surely the corresponding eigenvalue $\lambda$ is real.
In this case, $x$ is called an {\it $H$-eigenvector} and $\lambda$ is called an {\it $H$-eigenvalue}.
The \emph{H-spectrum} of $\T$ is the set of all H-eigenvalues of $\T$, denote by $\HSpec(\T)$.
The \emph{spectral radius} of $\T$ is defined as the maximum modulus of the eigenvalues of $\T$, denoted by $\rho(\T)$.
%$$\rho(\T)=\max\{|\lambda|: \lambda \mbox{ is an eigenvalue of } \T \}.$$
Denote by $\lambda_{\max}(\T), \lambda_{\min}(\T)$ the largest H-eigenvalue and the least H-eigenvalue of $\T$, respectively.

For a symmetric tensor, we have the following result.

\begin{lem} \label{semi}
Let $\T$ be a real symmetric tensor of order $k$ and dimension $n$.
Then

\begin{enumerate}

\item
{\em [\cite{ZhQiWu}, Theorem 3.6]} If $\T$ is also nonnegative, then
$$ \lambda_{\max}(\T)=\min\{\T x^k: x\in \mathbb{R}^{n}, x \ge 0,  \|x\|_k=1\},$$
where $\|x\|_k=\left(\sum_{i=1}^{n} |x_i^k|\right)^{1 \over k}$.
Furthermore, $x$ is an optimal solution of the above optimization
if and only if it is an eigenvector of $\T$ associated with $\lambda_{\max}(\T)$.

\item
{\em [\cite{Qi}, Theorem 5]} If $k$ is also even, then
$$\lambda_{\min}(\T)=\min\{\T x^k: x\in \mathbb{R}^{n}, \|x\|_k=1\},$$
and $x$ is an optimal solution of the above optimization
if and only if it is an eigenvector of $\T$ associated with $\lambda_{\min}(\T)$.
\end{enumerate}
\end{lem}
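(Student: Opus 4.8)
The plan is to derive both parts from a single variational principle on the $\ell_k$-unit sphere $S=\{x\in\R^n:\|x\|_k=1\}$: because $\T$ is symmetric, the constrained stationary points of the continuous map $x\mapsto\T x^k$ are exactly the H-eigenvectors of $\T$, and at such a point the objective value equals the associated H-eigenvalue. Concretely, $\nabla(\T x^k)=k\,\T x^{k-1}$ while the constraint $\sum_i x_i^k=\|x\|_k^k=1$ (valid for even $k$) has gradient $k\,x^{[k-1]}$, so the Lagrange condition reads $\T x^{k-1}=\mu\,x^{[k-1]}$; this is precisely the eigen-equation, and contracting it with $x$ gives $\T x^k=\mu\sum_i x_i^k=\mu$. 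Conversely, every H-eigenvector scaled onto $S$ realises its H-eigenvalue as a value of $\T x^k$. Part (2) runs this over all of $S$, and part (1) runs it over the nonnegative part $S_+=\{x\in S:x\ge 0\}$ while feeding in nonnegativity of $\T$.

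For part (2) ($k$ even) I would first use compactness of $S$ and continuity to obtain a minimiser $x^\ast$; by the computation above $\T(x^\ast)^k$ is an H-eigenvalue, hence $\ge\lambda_{\min}$. For the opposite bound I would take any H-eigenvector $y$ for $\lambda_{\min}$, observe $\sum_i y_i^k>0$ since $y\ne 0$ and $k$ is even, scale $y$ onto $S$, and read off $\T y^k=\lambda_{\min}$, so the minimum is $\le\lambda_{\min}$. Hence $\min_{x\in S}\T x^k=\lambda_{\min}$. The same stationarity condition gives the ``iff'': a minimiser satisfies $\T x^{k-1}=\lambda_{\min}\,x^{[k-1]}$, and conversely any eigenvector for $\lambda_{\min}$ normalised onto $S$ attains the minimum. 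This is \cite[Theorem 5]{Qi}.

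For part (1) I would add the hypothesis $\T\ge 0$ and invoke Perron--Frobenius theory for nonnegative symmetric tensors: $\lambda_{\max}(\T)=\rho(\T)$ is attained by a nonnegative eigenvector $x^\ast\ge 0$, which after normalisation lies in $S_+$ with $\T(x^\ast)^k=\lambda_{\max}$. The remaining, and decisive, point is the bound $\T x^k\le\rho(\T)$ for all $x\in S_+$; together with attainment at $x^\ast$ this yields $\lambda_{\max}(\T)=\max_{x\in S_+}\T x^k$, which is \cite[Theorem 3.6]{ZhQiWu}, and the optimal-solution characterisation then follows from the equality case of this bound, a maximiser being forced to be a nonnegative eigenvector for $\lambda_{\max}$ and conversely. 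I should record that the extremum realising $\lambda_{\max}$ is a \emph{maximum}, so the ``$\min$'' in the displayed statement is a misprint for ``$\max$'': already for $k=2$ and $\T=\left(\begin{smallmatrix}0&1\\1&0\end{smallmatrix}\right)$ one has $\min_{x\in S_+}\T x^k=0\ne 1=\lambda_{\max}=\max_{x\in S_+}\T x^k$, so the minimum over $S_+$ cannot equal $\lambda_{\max}$.

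The main obstacle is exactly this upper bound in part (1). Unlike part (2), where an interior stationarity argument hands over the eigenvalue directly, the constraint $x\ge 0$ is an inequality, so stationarity alone does not identify the extreme value; the estimate $\T x^k\le\rho(\T)$ on the cone $x\ge 0$ must be supplied by the Perron--Frobenius/Collatz--Wielandt machinery for nonnegative tensors. Part (2) is then essentially routine modulo compactness, the only care being the repeated use of even $k$ to make the $\|\cdot\|_k$-sphere compact and to guarantee $\sum_i y_i^k>0$ when normalising eigenvectors.
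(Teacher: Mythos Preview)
The paper does not prove this lemma at all: both parts are simply quoted from the cited references \cite{ZhQiWu} and \cite{Qi}, with no argument given. So there is no proof in the paper to compare yours against.

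That said, your sketch is correct and is essentially the standard argument behind those references. For part~(2) the compactness/Lagrange-multiplier computation is exactly how \cite{Qi} proceeds, and your care with the normalisation (even $k$ guaranteeing $\sum_i y_i^k>0$) is the right point to flag. For part~(1) you correctly isolate the real content as the Perron--Frobenius/Collatz--Wielandt bound $\T x^k\le\rho(\T)$ on the nonnegative orthant, which is what \cite{ZhQiWu} supplies; your remark that the boundary constraint $x\ge 0$ prevents a naive stationarity argument from closing the ``only if'' direction is also accurate.

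You are right that the ``$\min$'' displayed in part~(1) is a misprint for ``$\max$'': your $2\times 2$ counterexample is valid, and the paper's own use of the lemma (the remark that $\rho(G)$ satisfies part~(1), and the later upper bounds on $\lamin(G)$ via part~(2)) only makes sense with $\max$ in part~(1).
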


Let $G$ be a $k$-uniform hypergraph on $n$ vertices $v_1,v_2,\ldots,v_n$.
The {\it adjacency tensor} of $G$ \cite{CD} is defined as $\mathcal{A}(G)=(a_{i_{1}i_{2}\ldots i_{k}})$, an order $k$ dimensional $n$ tensor, where
\[a_{i_{1}i_{2}\ldots i_{k}}=\left\{
 \begin{array}{ll}
\frac{1}{(k-1)!}, &  \mbox{if~} \{v_{i_{1}},v_{i_{2}},\ldots,v_{i_{k}}\} \in E(G);\\
  0, & \mbox{otherwise}.
  \end{array}\right.
\]
The \emph{spectral radius, the least H-eigenvalue} of $G$ are referring to its adjacency tensor $\A(G)$, denoted by $\rho(G), \lamin(G)$ respectively.
The H-spectrum of $\A(G)$ is denoted by $\HSpec(G)$.

The spectral hypergraph theory has been an active topic in algebraic graph theory recently; see e.g. \cite{CD,FBH, FHB,Ni,PZ}.
By the Perron-Frobenius theorem for nonnegative tensors \cite{CPZ, FGH, YY1,YY2,YY3}, $\rho(G)$ is exactly the largest H-eigenvalue of $\A(G)$.
If $G$ is connected, there exists a unique positive eigenvector up to scales associated with $\rho(G)$, called the {\it Perron vector} of $G$.
Noting that the adjacency tensor $\A(G)$ is nonnegative and symmetric, so $\rho(G)$ holds (1) of Lemma \ref{semi}, and $\lamin(G)$ holds (2) of Lemma \ref{semi} if $k$ is even.
By Perron-Frobenius theorem, $\lamin(G) \ge -\rho(G)$.
By the following lemma, if $G$ is connected and non-odd-bipartite, then $\lamin(G) > -\rho(G)$.

\begin{lem}\cite{Ni,Ni0, SSW, FY}\label{ob-equiv}
Let $G$ be a $k$-uniform connected hypergraph. Then the following results are equivalent.

\begin{enumerate}
\item $k$ is even and $G$ is odd-bipartite.

\item $\lamin(G) = -\rho(G)$.

\item $\HSpec(G)=-\HSpec(G)$.

\end{enumerate}
\end{lem}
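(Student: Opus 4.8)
The plan is to prove the cycle of implications $(1)\Rightarrow(3)\Rightarrow(2)\Rightarrow(1)$. Although the statement is a known result whose pieces appear in the cited references, the essential mechanism is a single sign-change on eigenvectors of $\A(G)$ combined with the Perron--Frobenius theory for nonnegative tensors, so I would present it as one coherent argument.

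First I would prove $(1)\Rightarrow(3)$ by an explicit sign transformation. Suppose $k$ is even and $\{U,U^c\}$ is an odd-bipartition, and set $s_v=-1$ for $v\in U$ and $s_v=+1$ for $v\in U^c$. Given any eigenvector $x$ of $\A(G)$ with eigenvalue $\la$, define $y$ by $y_v=s_vx_v$. Using that $(\A(G)x^{k-1})_i$ is the sum over edges $e\ni v_i$ of the products $\prod_{v_j\in e,\,j\ne i}x_{v_j}$, and that for each edge $\prod_{v\in e}s_v=(-1)^{|e\cap U|}=-1$, one computes $(\A(G)y^{k-1})_i=-s_{v_i}\la x_i^{k-1}$; since $k-1$ is odd, $y_i^{[k-1]}=s_{v_i}x_i^{k-1}$, so $y$ is an eigenvector with eigenvalue $-\la$. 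Being a sign change, the map $x\mapsto y$ preserves realness and is an involution, hence a bijection of $\HSpec(G)$ onto $-\HSpec(G)$, giving $(3)$. The implication $(3)\Rightarrow(2)$ is then immediate: by Perron--Frobenius $\rho(G)=\lambda_{\max}(G)$ lies in $\HSpec(G)$, so the symmetry $\HSpec(G)=-\HSpec(G)$ forces $-\rho(G)\in\HSpec(G)$, and since every H-eigenvalue is at least $-\rho(G)$ we conclude $\lamin(G)=-\rho(G)$.

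The hard direction is $(2)\Rightarrow(1)$, which I would attack with an absolute-value argument. Let $z$ be an H-eigenvector for $\lamin(G)=-\rho(G)$. Taking moduli in the eigenequation gives $(\A(G)|z|^{k-1})_i\ge|(\A(G)z^{k-1})_i|=\rho(G)|z_i|^{k-1}$ for every $i$, where $|z|$ is the entrywise modulus. By the Collatz--Wielandt characterization of $\rho(G)$ for the irreducible (because $G$ is connected) nonnegative tensor $\A(G)$, this forces $|z|$ to be a positive multiple of the Perron vector; in particular $z_v\ne0$ for all $v$ and every inequality above is an equality. Writing $s_v=\mathrm{sign}(z_v)\in\{+1,-1\}$ and $\sigma_e=\prod_{v\in e}s_v$, equality in the triangle inequality means that for each edge $e\ni v_i$ the corresponding summand has the common sign $\sigma_e s_{v_i}$, and matching this against the sign of $(\A(G)z^{k-1})_i=-\rho(G)z_i^{k-1}$ yields $\sigma_e=-s_{v_i}^{\,k-2}$ for every $v_i\in e$.

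Finally I would extract both conclusions from this identity. If $k$ were odd, then $s_{v_i}^{\,k-2}=s_{v_i}$, so $\sigma_e=-s_{v_i}$ would hold for all $v_i\in e$, forcing all vertices of $e$ to share one sign and hence $\sigma_e=s_{v_i}^{\,k}=s_{v_i}$, a contradiction; thus $k$ is even. With $k$ even, $s_{v_i}^{\,k-2}=1$, so $\sigma_e=-1$ for every edge, i.e. $|e\cap U|$ is odd where $U=\{v:s_v=-1\}$; since $k$ is even, $|e\cap U^c|=k-|e\cap U|$ is also odd, so $\{U,U^c\}$ is an odd-bipartition and $G$ is odd-bipartite, establishing $(1)$. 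The main obstacle is precisely this $(2)\Rightarrow(1)$ step: one must carefully justify the positivity of $|z|$ and the equality cases from the tensor Perron--Frobenius / Collatz--Wielandt theory, and then perform the parity bookkeeping that simultaneously rules out odd $k$ and produces the odd-bipartition.
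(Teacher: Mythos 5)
The paper offers no proof of this lemma --- it is imported from \cite{Ni,Ni0,SSW,FY} --- so there is no internal argument to compare yours against. Your proof is correct and is essentially the standard one from those sources: the sign-change computation showing $(\A(G)y^{k-1})_i=-\la\,y_i^{k-1}$ gives $(1)\Rightarrow(3)$; $(3)\Rightarrow(2)$ is immediate from $\rho(G)\in\HSpec(G)$ and $|\la|\le\rho(G)$; and the modulus argument for $(2)\Rightarrow(1)$, with the equality case of the triangle inequality yielding $\sigma_e=-s_{v_i}^{k-2}$ for all $v_i\in e$, correctly both rules out odd $k$ and produces the odd-bipartition $U=\{v:z_v<0\}$ (note that $|e\cap U|$ odd and $|e\cap U^c|$ odd automatically make $U$ a nonempty proper subset).

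One point to fix in the write-up: for $k\ge 3$ the adjacency tensor of a connected hypergraph is \emph{weakly irreducible} in the sense of \cite{FGH}, but essentially never irreducible in the stronger sense of \cite{CPZ} (take $S$ to be all vertices but one: one would need an edge consisting of one vertex of $S$ and $k-1$ copies of the remaining vertex). The facts you invoke --- the Collatz--Wielandt characterization of $\rho(G)$, and that a nonnegative nonzero $x$ with $\A(G)x^{k-1}\ge\rho(G)x^{[k-1]}$ must be strictly positive and an eigenvector for $\rho(G)$ --- do hold for weakly irreducible nonnegative tensors (see \cite{FGH,YY2,YY3}), so the argument survives verbatim, but the hypothesis should be named correctly and those equality-case lemmas cited explicitly, since they carry the whole weight of the $(2)\Rightarrow(1)$ direction.
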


Finally, we introduce some notations used throughout out the paper.
Denote by $C_n$ a cycle of length $n$ as a simple graph.
Denote by $\mathbbm{1}$ an all-one vector whose size can be implicated by the context,
$\rank A$ the rank of a matrix $A$ over $\Z_2$, and $\mathbb{F}_q$ a field of order $q$.

\section{Characterization of minimal non-odd-transversal hypergraphs}

In this section we will give some equivalent conditions in terms of degrees and rank of the incidence matrix over $\Z_2$ for a hypergraph to be minimal non-odd-transversal.

\begin{lem}\label{conn-cut}
If $G$ is a minimal non-odd-transversal hypergraph, then $G$ is connected and contains no cut vertices.
\end{lem}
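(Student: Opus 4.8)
The plan is to treat the two assertions separately: connectedness follows quickly from the hereditary property, while the absence of cut vertices I would derive from the linear-algebraic reformulation of odd-transversality over $\Z_2$.

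\smallskip\noindent\textbf{Connectedness.} I would argue by contradiction. Suppose $G$ is disconnected. Since $G$ has no isolated vertices, every connected component carries at least one edge, and a vertex set is an odd transversal of $G$ if and only if its restriction to each component is an odd transversal of that component; hence $G$ is odd-transversal iff every component is. As $G$ is non-odd-transversal, some component $G_0$ is non-odd-transversal. Choosing any edge $e$ lying in a \emph{different} component (possible, since there are at least two components and each has an edge), the hypergraph $G-e$ still contains $G_0$ as an edge-induced sub-hypergraph, so by the hereditary property $G-e$ is non-odd-transversal, contradicting minimality. Thus $G$ is connected.

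\smallskip\noindent\textbf{A linear-algebraic reformulation.} The engine I would set up is that, writing $U$ by its $\{0,1\}$-indicator $x$, the hypergraph $G$ is odd-transversal iff the system $B_G x=\mathbbm{1}$ is solvable over $\Z_2$. By the Fredholm alternative over $\Z_2$, this fails exactly when some $w$ with $B_G^{\top}w=0$ has $\abs{\sp w}$ odd; equivalently, $G$ is non-odd-transversal iff there is a nonempty edge set $F$ (namely $F=\sp w$) with every vertex lying in an even number of edges of $F$ and $\abs{F}$ odd. Now minimality says every $G-e$ is odd-transversal, i.e. no such $F$ avoids $e$; as this holds for all $e$, every such $F$ must equal $E(G)$. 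Hence $F=E(G)$ is the \emph{unique} odd-support element of $\ker B_G^{\top}$, which (counting the coset of even-support elements) is equivalent to $\dim_{\Z_2}\ker B_G^{\top}=1$ with generator $\mathbbm{1}$; in particular every vertex of $G$ has even degree, $\abs{E(G)}$ is odd, and $\rank B_G=\abs{E(G)}-1$.

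\smallskip\noindent\textbf{No cut vertex.} Suppose $v$ is a cut vertex, so $E(G)=E_1\cup E_2$ with $E_1,E_2$ nonempty and the edge-induced sides $G_i:=G|_{E_i}$ meeting only at $v$ (equality $V(G_1)\cap V(G_2)=\{v\}$ because $G$ is connected). Ordering the columns of $B_G$ as $V(G_1)\setminus\{v\}$, then $v$, then $V(G_2)\setminus\{v\}$, and the rows as $E_1$ then $E_2$, the matrix becomes block-structured, and $w=(w_1,w_2)\in\ker B_G^{\top}$ means $w_i$ gives even degree to every vertex of $V(G_i)\setminus\{v\}$ while the two $v$-degrees match in parity. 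If $d_{E_1}(v)$ is \emph{even}, then $(\mathbbm{1},0)$ (all-ones on $E_1$) lies in $\ker B_G^{\top}$, is nonzero, and differs from $\mathbbm{1}$ since $E_2\neq\emptyset$, contradicting $\dim\ker B_G^{\top}=1$; the case $d_{E_2}(v)$ even is symmetric. The one remaining configuration is that $v$ has \emph{odd} degree on both sides (consistent, as $d_{E_1}(v)+d_{E_2}(v)=d(v)$ is even), and this is the step I expect to be the main obstacle. To dispose of it I would use the identity $\sum_{e\in E_i}\abs{e}\equiv d_{E_i}(v)\pmod 2$, valid because every vertex of $V(G_i)\setminus\{v\}$ has even degree; this ties the parity of $d_{E_i}(v)$ to the edge sizes, and in the settings of interest (notably even uniformity, where each $\abs{e}$ is even and so $d_{E_i}(v)$ is forced even) it rules the bad configuration out and returns us to the contradiction above. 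Pinning down this parity bookkeeping at $v$ — excluding the ``odd on both sides'' case in full generality — is the delicate point of the argument.
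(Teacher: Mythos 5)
Your connectedness argument is correct and is essentially the paper's (the paper compresses it to one sentence: a component of a disconnected $G$ would be a nonempty proper edge-induced sub-hypergraph that is non-odd-transversal). Your $\Z_2$ reformulation is also sound --- minimality is equivalent to $\ker B_G^{\top}=\{0,\mathbbm{1}\}$ with $m$ odd, which is Theorem \ref{main}(3) --- and your treatment of the case where the cut vertex has even degree on one side is correct.

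The case you flag as the delicate point, namely $d_{E_1}(v)$ and $d_{E_2}(v)$ both odd, is not merely a step you failed to close: it is where the statement itself fails for general (non-uniform) hypergraphs. Take $V=\{v,a,b,c,d,e\}$ with edges $\{v,a,b\}$, $\{a,b\}$, $\{v,c,d\}$, $\{c,e\}$, $\{d,e\}$. Every vertex has degree $2$, $m=5$ is odd, and $\ker B_G^{\top}=\{0,\mathbbm{1}\}$ (so $\rank B_G=m-1$ over $\Z_2$); hence this hypergraph is minimal non-odd-transversal by Theorem \ref{main}(3). Yet $v$ is a cut vertex separating $\{a,b\}$ from $\{c,d,e\}$, and it has odd degree on each side. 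The paper's own one-line argument for the cut-vertex claim silently assumes that odd transversals of $G_1$ and $G_2$ can be chosen to agree at $v$, which is exactly what fails here. Your parity identity $\sum_{e\in E_i}|e|\equiv d_{E_i}(v)\pmod 2$ is the right diagnosis: the bad configuration forces an odd number of odd-sized edges on each side, so it cannot occur when all edges have even size (in particular for even-uniform hypergraphs, the only setting in which the cut-vertex conclusion is invoked later), and there your argument does complete a correct proof. But no parity bookkeeping can finish the proof in full generality, because the ``no cut vertices'' assertion is false as stated; only the connectedness half survives without extra hypotheses.
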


\begin{proof}
If $G$ contains more than one connected component, then at least one of them is non-odd-transversal,  a contradiction to the definition.
So $G$ itself is connected.
Suppose $G$ contains a cut vertex.
Then $G$ is obtained from two connected nontrivial sub-hypergraphs $G_1,G_2$ sharing exactly one vertex (the cut vertex).
So, at least one of $G_1,G_2$ is non-odd-transversal, also a contradiction.
\end{proof}

\begin{lem}\label{eqn-rank}
Let $G$ be a connected hypergraph, and $B_G$ be the edge-vertex incidence matrix of $G$.
Then $G$ is odd-transversal if and only if the equation
\begin{equation}\label{inc}
B_G x=\mathbbm{1} \mbox{~over~} \Z_2
\end{equation}
has a solution, or equivalently
\begin{equation}\label{rank}
\rank B_G =\rank (B_G, \mathbbm{1}) \mbox{~over~} \Z_2.
\end{equation}
\end{lem}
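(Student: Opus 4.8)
The plan is to unwind the definitions so that the statement becomes a transparent reformulation of solvability of a linear system over $\Z_2$. Recall that $U \subseteq V(G)$ is an odd transversal exactly when every edge $e$ meets $U$ in an odd number of vertices. First I would encode a vertex subset $U$ as its indicator vector $x \in \Z_2^{V(G)}$, where $x_v = 1$ if $v \in U$ and $x_v = 0$ otherwise. The key observation is that for a fixed edge $e$, the quantity $|e \cap U| \bmod 2$ equals $\sum_{v \in e} x_v = \sum_v b_{e,v} x_v$ computed over $\Z_2$, which is precisely the $e$-th coordinate of $B_G x$. Hence $|e \cap U|$ is odd for every edge $e$ if and only if $(B_G x)_e = 1$ for all $e$, i.e. $B_G x = \mathbbm{1}$ over $\Z_2$. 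This establishes the equivalence between $G$ being odd-transversal and the solvability of \eqref{inc}, and at this step I would note that no connectedness is actually needed — the hypothesis is likely kept only for uniformity with the surrounding section.

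Second, I would deduce the rank condition \eqref{rank} from the solvability of \eqref{inc} by invoking the standard Rouché–Capelli (consistency) criterion, valid over any field and in particular over $\Z_2 = \mathbb{F}_2$: a linear system $A x = b$ has a solution if and only if $\rank A = \rank(A, b)$, where $(A,b)$ is the augmented matrix. Applying this with $A = B_G$ and $b = \mathbbm{1}$ gives that \eqref{inc} is solvable if and only if $\rank B_G = \rank(B_G, \mathbbm{1})$. The direction ``solvable $\Rightarrow$ equal rank'' is immediate because $\mathbbm{1}$ then lies in the column space of $B_G$, so adjoining it cannot increase the rank; the converse follows because equal ranks force $\mathbbm{1}$ to be a $\Z_2$-linear combination of the columns of $B_G$, whose coefficient vector is a solution $x$.

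Chaining the two equivalences yields the chain ``$G$ odd-transversal $\iff$ \eqref{inc} solvable $\iff$ \eqref{rank} holds,'' which is exactly the assertion. I do not expect any genuine obstacle here: the only points demanding a word of care are that all arithmetic must be read over $\Z_2$ (so that parity of $|e \cap U|$ correctly matches the matrix-vector product), and that the consistency criterion is being used over the field $\mathbb{F}_2$ rather than over $\Z$ or $\R$. Since $\Z_2$ is a field, the rank theory underlying Rouché–Capelli applies verbatim, so the proof is essentially a two-line chain of if-and-only-if statements once the indicator-vector encoding is set up.
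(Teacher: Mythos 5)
Your proposal is correct and follows essentially the same route as the paper: encode an odd transversal by its indicator vector $x\in\Z_2^{V(G)}$, observe that $(B_Gx)_e=|e\cap U|\bmod 2$, and conclude via the standard consistency criterion for linear systems over the field $\mathbb{F}_2$. The paper's proof is the same two-way translation between $U$ and $x$ (it leaves the Rouch\'e--Capelli step implicit), and your remark that connectedness is not actually used is accurate.
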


\begin{proof}
If $G$ is odd-transversal, then there is an odd-transversal $U$ of $G$.
Define a vector $ x \in \Z_2^{V(G)}$ such that $x_v=1$ if $v \in U$, and $x_v =0$ otherwise.
By the definition, it is easy to verify that $x$ is a solution of the equation (\ref{inc}).
On the other hand, if $x$ is a solution of the equation (\ref{inc}), define $U=\{v: x_v =1\}$.
Then $U \ne \emptyset$, and for each edge $e$ of $G$, $|e \cap U|$ is odd, implying that $G$ is odd-transversal.
\end{proof}

For each edge $e \in E(G)$, define an \emph{indicator vector} $\chi_e \in \Z_2^{V(G)}$ such that $\chi_e(v)=1$ if $v \in e$ and $\chi_e(v)=0$ otherwise.
Then $B_G$ consists of those $\chi_e$ as row vectors for all $e \in E(G)$.

\begin{lem}\label{odd-sum}
Let $G$ be a connected hypergraph with $m$ edges.
If $m$ is odd, and each vertex has an even degree, or equivalently $\sum_{e \in E(G)} \chi_e=0$ over $\Z_2$, then $G$ is non-odd-transversal.
\end{lem}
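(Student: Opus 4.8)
The plan is to argue by contradiction: assume $G$ possesses an odd transversal $U$ and derive a parity contradiction by double-counting the vertex-edge incidences between $U$ and the edges of $G$. Before counting, I would record the combinatorial meaning of the two hypotheses. The assumption that every vertex has even degree is precisely the statement that each column of the incidence matrix $B_G$ sums to $0$ over $\Z_2$, i.e.\ $\sum_{e \in E(G)} \chi_e = 0$, since $d(v)$ is exactly the sum of the entries in the column of $B_G$ indexed by $v$. The assumption that $G$ is odd-transversal means, by definition, that there is a subset $U \subseteq V(G)$ with $|e \cap U|$ odd for every edge $e$.

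Next I would evaluate the quantity $N := \sum_{e \in E(G)} |e \cap U|$ in two different ways, both modulo $2$. Summing over edges, each term $|e \cap U|$ is odd and there are $m$ edges, so $N \equiv m \pmod 2$, which is $\equiv 1$ because $m$ is odd. Summing over vertices instead, counting incidences gives $N = \sum_{v \in U} d(v)$, which is even because every $d(v)$ is even. Thus $N$ would be simultaneously odd and even, a contradiction; hence no odd transversal exists and $G$ is non-odd-transversal.

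Equivalently, and more in the spirit of the preceding lemmas, the same step can be phrased linearly: if $B_G x = \mathbbm{1}$ had a solution $x$ over $\Z_2$ (which by Lemma \ref{eqn-rank} is equivalent to $G$ being odd-transversal), then left-multiplying by the all-ones row vector yields $\left(\sum_{e \in E(G)} \chi_e\right) x = \mathbbm{1}^\top \mathbbm{1} = m$ over $\Z_2$; the left-hand side is $0$ by the even-degree hypothesis while the right-hand side is $1$ since $m$ is odd, which is absurd. There is no real obstacle in this argument; the only point requiring a moment's care is the bookkeeping of parity in the two directions of the count, together with checking that the even-degree condition really is equivalent to $\sum_{e \in E(G)} \chi_e = 0$ over $\Z_2$, which is immediate from the definition of degree as a column sum of $B_G$. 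Note also that connectedness of $G$ is not needed for this particular implication.
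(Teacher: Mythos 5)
Your proof is correct, and it takes a more elementary route than the paper's. The paper proves the lemma by a rank computation: it row-reduces $(B_G,\mathbbm{1})$ by adding the first row to all others, observes that $\chi_{e_1}=\sum_{i=2}^m(\chi_{e_i}+\chi_{e_1})$ when $m$ is odd and $\sum_e\chi_e=0$, and concludes $\rank(B_G,\mathbbm{1})=\rank B_G+1$, so that Lemma \ref{eqn-rank} applies. Your double-counting of $N=\sum_{e\in E(G)}|e\cap U|=\sum_{v\in U}d(v)$ reaches the same contradiction without any matrix manipulation, and your second phrasing --- left-multiplying $B_Gx=\mathbbm{1}$ by $\mathbbm{1}^\top$ to get $0=m=1$ over $\Z_2$ --- is really the cleanest way to say what the paper's row reduction is doing: the sum of all rows of $(B_G,\mathbbm{1})$ is $(0,\dots,0\mid 1)$, so the system is inconsistent. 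All three arguments rest on the same parity fact (the all-ones vector lies in the left kernel of $B_G$ but not of $(B_G,\mathbbm{1})$); yours exposes it directly, while the paper's version has the minor advantage of setting up the notation and the rank bookkeeping that it reuses in the proof of Theorem \ref{main}. Your observation that connectedness is not needed is also accurate --- the paper's proof does not use it either.
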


\begin{proof}
Let $e_1,\ldots,e_m$ be edges of $G$.
Write $(B_G,\mathbbm{1})$ as the following form:
\begin{equation}\label{bg1}
(B_G,\mathbbm{1})=\left(
  \begin{array}{cc}
    \chi_{e_1} & 1 \\
    \chi_{e_2} & 1 \\
    \vdots & \vdots \\
    \chi_{e_m} & 1 \\
  \end{array}
\right).
\end{equation}
Adding the first row to all other rows over $\Z_2$, we will have
\begin{equation}\label{bg1T}
\left(
  \begin{array}{cc}
    \chi_{e_1} & 1 \\
    \chi_{e_2}+\chi_{e_1} & 0 \\
    \vdots & \vdots \\
    \chi_{e_m}+\chi_{e_1} & 0 \\
  \end{array}
\right)=:\left(
           \begin{array}{cc}
             \chi_{e_1} & 1 \\
             C & O \\
           \end{array}
         \right).
\end{equation}
So, $\rank(B_G,\mathbbm{1})=1+\rank C$.
As $m$ is odd and $\sum_{e \in E(G)} \chi_e=0$, $$\chi_{e_1}=\sum_{i=2}^m (\chi_{e_i}+\chi_{e_1}),$$
 implying that $\rank B_G =\rank C$.
By Lemma \ref{eqn-rank}, $G$ is non-odd-transversal.
\end{proof}

\begin{thm}\label{main}
Let $G$ be a connected hypergraph with $m$ edges. The following are equivalent.
\begin{enumerate}
  \item $G$ is minimal non-odd-transversal.
  \item $m$ is odd, $\sum_{e \in E(G)}\chi_e=0$ over $\Z_2$, and $\sum_{e \in F} \chi_e \ne 0$ over $\Z_2$ for any nonempty proper subset $F$ of $E(G)$.
  \item $m$ is odd, $\sum_{e \in E(G)}\chi_e=0$ over $\Z_2$, and $\rank B_G=m-1$ over $\Z_2$.

  \item $m$ is odd, each vertex of $G$ has an even degree, and any nonempty proper edge-induced sub-hypergraph of $G$ contains vertices of odd degrees.
\end{enumerate}
\end{thm}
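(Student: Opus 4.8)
The plan is to prove the theorem by establishing the cycle of implications $(1)\Rightarrow(2)\Rightarrow(3)\Rightarrow(4)\Rightarrow(1)$, exploiting the translation already set up in Lemma~\ref{eqn-rank} and Lemma~\ref{odd-sum} between odd-transversality and solvability of $B_Gx=\mathbbm{1}$ over $\Z_2$. The key observation to keep in mind throughout is that for any subset $F\subseteq E(G)$, the edge-induced sub-hypergraph $G|_F$ has incidence matrix consisting of the rows $\chi_e$ for $e\in F$, and that $\sum_{e\in F}\chi_e=0$ over $\Z_2$ holds precisely when every vertex of $G|_F$ has even degree. This identifies the ``structural'' condition in (4) with the ``algebraic'' condition in (2), so the equivalence $(2)\Leftrightarrow(4)$ should be essentially a restatement once the dictionary is in place.

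For $(1)\Rightarrow(2)$, I would first argue that a minimal non-odd-transversal $G$ must satisfy $\sum_{e\in E(G)}\chi_e=0$ over $\Z_2$ with $m$ odd. The natural route is via Lemma~\ref{eqn-rank}: since $G$ is non-odd-transversal, $B_Gx=\mathbbm{1}$ is unsolvable, so $\mathbbm{1}\notin\mathrm{rowspace}(B_G)$, which over $\Z_2$ means $\mathbbm{1}$ is orthogonal to the left null space but $B_G$ is not; more usefully, unsolvability of $B_Gx=\mathbbm{1}$ means there is a row-dependency $\sum_{e\in F}\chi_e=0$ with $\sum_{e\in F}1=1$, i.e.\ a subset $F$ of odd size whose indicator vectors sum to zero. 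Minimality then forces $F=E(G)$ (otherwise $G|_F$ would itself be non-odd-transversal by Lemma~\ref{odd-sum}, contradicting that proper edge-deletions are odd-transversal), giving $m=|F|$ odd and $\sum_{e\in E(G)}\chi_e=0$. The remaining clause of (2), that no nonempty proper $F$ has $\sum_{e\in F}\chi_e=0$, also follows from minimality: if such an $F$ existed, then either $F$ or $E(G)\setminus F$ would have odd cardinality, and Lemma~\ref{odd-sum} would supply a proper non-odd-transversal sub-hypergraph.

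For $(2)\Rightarrow(3)$, I would show the rank condition: the hypotheses say the only dependency among the $m$ rows $\chi_e$ is the full sum, so the rows span a space of dimension $m-1$, i.e.\ $\rank B_G=m-1$. For $(3)\Rightarrow(4)$, translate $\sum_{e\in E(G)}\chi_e=0$ into ``every vertex has even degree,'' and use $\rank B_G=m-1$ to show that any proper nonempty $F$ yields $\sum_{e\in F}\chi_e\neq0$ (since otherwise we would have two independent dependencies, forcing $\rank B_G\le m-2$), which is exactly the statement that $G|_F$ has a vertex of odd degree. Finally $(4)\Rightarrow(1)$: the conditions ``$m$ odd and every vertex even'' give non-odd-transversality of $G$ by Lemma~\ref{odd-sum}, while ``every proper nonempty $G|_F$ has a vertex of odd degree'' must be leveraged, together with Lemma~\ref{eqn-rank}, to show each $G-e$ is odd-transversal.

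The main obstacle I anticipate is this last step $(4)\Rightarrow(1)$, specifically verifying that deleting a single edge leaves an \emph{odd-transversal} hypergraph. Knowing that $G|_F$ has an odd-degree vertex for proper $F$ tells us $\sum_{e\in F}\chi_e\neq0$, hence $\rank B_G=m-1$; but to conclude $G-e$ is odd-transversal via Lemma~\ref{eqn-rank} I must check that $\mathbbm{1}$ lies in the row space of the incidence matrix of $G-e$, equivalently that deleting row $\chi_e$ does not create a new odd-sized zero-sum dependency and does restore solvability. The delicate point is that Lemma~\ref{eqn-rank} is stated for \emph{connected} hypergraphs, and $G-e$ (or its components) need not obviously satisfy the rank/solvability criterion without careful bookkeeping; I would handle this by working directly with the dependency structure of the rows, using that the unique dependency $\sum_{e\in E(G)}\chi_e=0$ becomes, after removing $\chi_e$, the explicit expression $\chi_e=\sum_{e'\neq e}\chi_{e'}$, which shows $\mathbbm{1}$ is solvable against the reduced system and thereby forces odd-transversality edge-by-edge.
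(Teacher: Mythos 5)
Your proposal is correct and follows essentially the same route as the paper: everything is reduced to linear algebra for $B_G$ over $\Z_2$ via Lemma~\ref{eqn-rank} and Lemma~\ref{odd-sum}, with the same dictionary between ``every vertex of $G|_F$ has even degree'' and ``$\sum_{e\in F}\chi_e=0$''. The one local difference is your $(1)\Rightarrow(2)$, where the left-kernel characterization of unsolvability (an odd-sized $F$ with $\sum_{e\in F}\chi_e=0$) yields $m$ odd and $\sum_{e\in E(G)}\chi_e=0$ in a single stroke, whereas the paper row-reduces $(B_G,\mathbbm{1})$ and rules out even $m$ by a separate degree-parity argument; just repair the garbled parenthetical ``$\mathbbm{1}\notin\mathrm{rowspace}(B_G)$'', which should say that $\mathbbm{1}$ fails to lie in the column space of $B_G$, equivalently is not orthogonal to the left null space.
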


\begin{proof}
$(1) \Rightarrow (2)$.
Suppose that $G$ is minimal non-odd-transversal.
By Eq. (\ref{bg1}) and Eq. (\ref{bg1T}), as $\rank B_G \ne \rank(B_G,\mathbbm{1})$ over $\Z_2$ by Lemma \ref{eqn-rank},
$\chi_{e_1}$ is a linear combination of $\chi_{e_i}+\chi_{e_1}$ for $i=2,\ldots,m$.
So there exist $a_i \in \Z_2$ for $i=2, \ldots, m$ such that
\begin{equation}\label{lincom} \chi_{e_1}=\sum_{i=2}^m a_i(\chi_{e_i}+\chi_{e_1})=\left(\sum_{i=2}^m a_i \right)e_1+\sum_{i=2}^m a_i\chi_{e_i}.\end{equation}
We assert that $a_i=1$ for $i=2,\ldots,m$.
Otherwise, there exists a $j$, $2 \le j \le m$, such that $a_j=0$.
Then $\chi_{e_1}$ is also a linear combination of $\chi_{e_i}+\chi_{e_1}$ for $i=2,\ldots,m$ and $i \ne j$.
So, $\rank B_{G-e_j} \ne \rank(B_{G-e_j},\mathbbm{1})$, implying that $G-e_j$ is non-odd-transversal by Lemma \ref{eqn-rank}, a contradiction to the definition.

If $m$ is even, then $\sum_{i=2}^m \chi_{e_i}=0$ by Eq. (\ref{lincom}), implying the vertices of $V(G)$ all have even degrees in $G-e_1$.
So the vertices of $e_1$ all have odd degrees in $G$.
By the arbitrariness of $e_1$, each vertex has an odd degree in $G$.
However, there exists a vertex $v \notin e_1$ so that $d_G(v)=d_{G-e_1}(v)$, which is an even number, a contradiction.

So, $m$ is odd, and $\sum_{e \in E(G)}\chi_e=0$ by Eq. (\ref{lincom}).
Assume to the contrary there exists a nonempty proper subset $F$ of $E(G)$, $\sum_{e \in F} \chi_e= 0$ over $\Z_2$.
If $|F|$ is odd, then by Lemma \ref{odd-sum}, the sub-hypergraph $G|_F$ induced by the edges of $F$ is non-odd-transversal, a contradiction to the definition.
Otherwise, $|F|$ is even, then $|E(G)\backslash F|$ is odd as $m$ is odd, and the sub-hypergraph $G|_{E(G)\backslash F}$ is non-odd-transversal, also a contradiction.

$(2) \Rightarrow (3)$.
As $\sum_{e \in E(G)}\chi_e=0$,  $\rank B_G \le m-1$ over $\Z_2$.
If $\rank B_G \le m-2$ over $\Z_2$, then $\chi_{e_1}, \ldots, \chi_{e_{m-1}}$ are linear dependent.
So there exists $a_1,\ldots, a_{m-1} \in \Z_2$, not all being zero, such that $\sum_{i=1}^{m-1} a_i\chi_{e_i}=0$.
Taking $F=\{e_i: a_i=1, 1 \le i \le m-1\}$, then $\sum_{e \in F} \chi_{e}=\sum_{i=1}^{m-1} a_i\chi_{e_i}=0$, a contradiction to (2).
So $\rank B_G =m-1$ over $\Z_2$.

$(3) \Rightarrow (1)$. By Lemma \ref{odd-sum}, $G$ is non-odd-transversal.
Let $e$ be an arbitrary edge of $G$.
Adding all rows $\chi_f$ for $f \ne e$ to the row $\chi_e$ will yield a zero row as $\sum_{e \in E(G)}\chi_e=0$.
%As $\sum_{e \in E(G)}\chi_e=0$, $\chi_{\bar{e}} =\sum_{e \in E(G-\bar{e})}\chi_e$.
So $\rank B_{G-e}=\rank B_G=m-1$ over $\Z_2$,
implying that $B_{G-e}$ has full rank over $\Z_2$ with respect to rows.
Hence,  $\rank B_{G-e}=\rank (B_{G-e},\mathbbm{1})$ over $\Z_2$, and $G-e$ is odd-transversal by Lemma \ref{eqn-rank}.
So $G$ is minimal non-odd-transversal.

Of course (2) is equivalent to (4).
\end{proof}

\begin{rmk}\label{m-1}
From the proof of $(3) \Rightarrow (1)$ in Theorem \ref{main},
if $G$ is minimal non-odd-transversal hypergraphs with $m$ edges, then any $m-1$ rows of $B_G$ are linear independent over $\Z_2$.
\end{rmk}

\begin{exm}\label{genexm} The following are minimal non-odd-transversal hypergraphs by verifying the degrees and
 the rank of incidence matrix over $\Z_2$ according to Theorem \ref{main}, where the last two hypergraphs are square.
\begin{enumerate}
\item $\{1,2,3,4\}$, $\{2,3,4,5\}$, $\{1,5\}$.

\item $\{1,2,3\}$, $\{2,3,4\}$, $\{3,4,5\}$, $\{1,4,5\}$, $\{3,4\}$.

\item $\{1,2,3\}$, $\{1,3,4,5\}$, $\{1,2,4,6\}$, $\{1,5,6,7\}$, $\{2,4,7\}$, $\{2,5,6,7\}$, $\{4,5,6,7\}$.
\end{enumerate}
\end{exm}

From Example \ref{genexm}, we know a minimal non-odd-transversal hypergraph can contain both even-sized edges and odd-sized edges.
In the following we will discuss minimal non-odd-transversal hypergraphs only with even-sized edges.

\begin{cor}\label{nm}
Let $G$ be an minimal non-odd-transversal hypergraph only with even-sized edges, which has $n$ vertices and $m$ edges.
Then the following results hold.

\begin{enumerate}
  \item $n \ge m$.
  \item For $1 \le t \le m-1$, any $t$ edges intersect at least $t+1$ vertices.
  \item The incidence bipartite graph $\Gamma_G$ has a matching $M$ such that $E(G)$ is matched by $M$, namely,
  there exists an injection $f: E(G) \to V(G)$ such that $f(e) \in e$ for each $e \in E(G)$.
\end{enumerate}

\end{cor}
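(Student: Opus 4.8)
The plan is to derive all three conclusions from a single observation: since every edge of $G$ has even size, each indicator row satisfies $\mathbbm{1}^\top \chi_e = 0$ over $\Z_2$, so every row of $B_G$ lies in the even-weight hyperplane $H = \{x \in \Z_2^{V(G)} : \mathbbm{1}^\top x = 0\}$, which has dimension $n-1$. This caps the $\Z_2$-rank of $B_G$, and of any submatrix built from a subset of its rows, from above; Theorem \ref{main} and Remark \ref{m-1} then supply matching lower bounds.

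For (1), I would combine the upper bound $\rank B_G \le \dim H = n-1$ with the equality $\rank B_G = m-1$ from Theorem \ref{main}(3). This gives $m-1 \le n-1$, hence $n \ge m$.

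For (2), fix edges $e_1, \ldots, e_t$ with $1 \le t \le m-1$ and set $U = \bigcup_{i=1}^t e_i$. By Remark \ref{m-1} any $m-1$ rows of $B_G$ are linearly independent over $\Z_2$, so in particular $\chi_{e_1}, \ldots, \chi_{e_t}$ are independent. Their supports lie in $U$, so restricting to the columns indexed by $U$ discards only zero entries and preserves the rank; the resulting $t \times |U|$ matrix thus has rank $t$. Each restricted row again has even weight, hence lies in the even-weight subspace of $\Z_2^{U}$ of dimension $|U|-1$. Therefore $t \le |U|-1$, i.e. these $t$ edges meet at least $t+1$ vertices. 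For (3), I would then verify Hall's condition for the incidence bipartite graph $\Gamma_G$ with parts $E(G)$ and $V(G)$: for $F \subseteq E(G)$ of size $t \le m-1$, part (2) gives $|\bigcup_{e \in F} e| \ge t+1 > |F|$, while for $F = E(G)$ the neighborhood is all of $V(G)$ (there are no isolated vertices), of size $n \ge m$ by part (1). Hall's marriage theorem then produces a matching of $\Gamma_G$ saturating $E(G)$, equivalently the injection $f : E(G) \to V(G)$ with $f(e) \in e$.

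The genuinely load-bearing step is the even-weight observation: it is what upgrades the bound $n \ge m-1$ (immediate from $\rank B_G = m-1$) to the sharp $n \ge m$, and, more delicately, it supplies the \emph{strict} inequality $|U| \ge t+1$ in (2), without which Hall's condition on proper subsets would fail by exactly one. The remainder is routine once one notices that (1) and (2) are the same rank computation applied globally and to sub-hypergraphs respectively, and that (3) is merely Hall's theorem fed by (1) and (2).
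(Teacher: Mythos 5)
Your proof is correct and matches the paper's argument essentially step for step: part (1) combines $\rank B_G=m-1$ with the even-row-sum bound $\rank B_G\le n-1$; part (2) applies the same rank computation to the submatrix $B_G[e_1,\ldots,e_t\,|\,U]$ via Remark \ref{m-1}; and part (3) is Hall's theorem, for which the paper omits the verification of Hall's condition that you spell out.
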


\begin{proof}
Consider the incidence matrix $B_G$ of $G$.
As $G$ contains only even-sized edges, each row sum of $B_G$ is zero over $\Z_2$, which implies $\rank B_G \le n-1$.
By Theorem \ref{main}(3), $\rank B_G=m-1$, yielding the result (1).

Let $e_1,\ldots, e_t$ be $t$ edges of $G$, where $1 \le t \le m-1$.
Let $U=\cup_{i=1}^t e_i$.
Let $B_G[e_1,\ldots,e_t|U]$ be the sub-matrix of $B_G$ with rows indexed $e_1,\ldots, e_t$ and columns indexed by the vertices of $U$.
By Remark \ref{m-1}, $\rank B_G[e_1,\ldots,e_t|U]=t \le |U|-1$, as each row sum of the sub-matrix is zero.
So we have $|U| \ge t+1$, yielding the result (2).

The result (3) follows from Hall's Theorem.
\end{proof}

\begin{cor}\label{pm}
Let $G$ be a square minimal non-odd-transversal hypergraph only with even-sized edges.
Then

\begin{enumerate}
  \item The incidence bipartite graph $\Gamma_G$ has a perfect matching, namely, there exists a bijection $f: E(G) \to V(G)$ such that $f(e) \in e$ for each $e \in E(G)$.

  \item For each nonempty proper subset $U$ of $V(G)$, $G|_U$ contains at least $|U|+1$ edges, and also contains odd-sized edges.

\end{enumerate}
\end{cor}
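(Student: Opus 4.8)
The plan is to deduce both parts from the structure theory already developed for minimal non-odd-transversal hypergraphs with only even-sized edges, using crucially that $G$ is square, so that $n=m$, together with the incidence-matrix characterization in Theorem \ref{main} and the Hall-type consequences in Corollary \ref{nm}.

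For part (1) the work is essentially already done. I would invoke Corollary \ref{nm}(3), which gives an injection $f\colon E(G)\to V(G)$ with $f(e)\in e$ for each edge $e$, i.e.\ a matching of $\Gamma_G$ saturating the side $E(G)$. Since $G$ is square, $|E(G)|=|V(G)|=n$, and an injection between two finite sets of equal cardinality is automatically a bijection; the associated matching thus saturates $V(G)$ as well and is perfect. This is the short step.

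For the edge-count in part (2) the idea is to control the edges of $G$ that \emph{avoid} $U$. Set $W=V(G)\setminus U$, which is nonempty because $U$ is proper, and let $s$ be the number of edges $e$ with $e\subseteq W$. Since $G$ has no isolated vertices and $U\neq\emptyset$, not every edge can lie inside $W$, so $s\le m-1$; applying Corollary \ref{nm}(2) to these $s$ edges (when $s\ge 1$) shows they meet at least $s+1$ vertices, all contained in $W$, whence $|W|\ge s+1$. Using $m=n=|U|+|W|$ this rearranges to: the number of edges of $G$ meeting $U$ equals $m-s\ge |U|+1$, and the case $s=0$ is immediate from $m=n\ge |U|+1$. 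For the odd-size assertion I would argue algebraically with the indicator $\chi_U\in\Z_2^{V(G)}$: the entry $(B_G\chi_U)_e$ records the parity of $|e\cap U|$, so if every edge met $U$ in an even number of vertices then $B_G\chi_U=0$ over $\Z_2$. But $G$ has only even edges, so $B_G\mathbbm{1}=0$, and $\rank B_G=m-1=n-1$ by Theorem \ref{main}(3); hence the right kernel of $B_G$ is the one-dimensional space $\{0,\mathbbm{1}\}$, and $\chi_U\in\{0,\mathbbm{1}\}$ would force $U=\emptyset$ or $U=V(G)$, contradicting that $U$ is nonempty and proper. Therefore some edge $e$ has $|e\cap U|$ odd, yielding an odd-sized edge of $G|_U$.

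The main point requiring care is the bookkeeping of edges in $G|_U$. Distinct edges $e\neq e'$ of $G$ can satisfy $e\cap U=e'\cap U$ (for instance, taking $U=\{v\}$ in a triangle $C_3$, the two edges through $v$ both trace to the singleton $\{v\}$), so the set $\{e\cap U:\,e\cap U\neq\emptyset\}$ may be strictly smaller than the number of edges of $G$ incident to $U$. Consequently the bound $m-s\ge|U|+1$ is genuinely a statement about edges of $G$ meeting $U$, and I would read the edge-count in part (2) with this multiplicity (equivalently, interpreting the edges of $G|_U$ as indexed by those $e\in E(G)$ with $e\cap U\neq\emptyset$). The odd-size conclusion is unaffected by this subtlety, since it only asserts the existence of a single edge with odd trace on $U$.
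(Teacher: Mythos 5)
Your proof is correct. Part (1) is identical to the paper's. For the edge count in part (2) you and the paper prove the same inequality by the same underlying fact: you bound the number $s$ of edges avoiding $U$ by applying Corollary \ref{nm}(2) to them inside $U^c$, while the paper applies Remark \ref{m-1} directly to the submatrix $B_G[F^c|U^c]$ (whose rows have zero sum over $\Z_2$) to get $|F^c|\le |U^c|-1$; since Corollary \ref{nm}(2) is itself proved from Remark \ref{m-1} in exactly this way, the two arguments are equivalent, and your explicit handling of the $s=0$ case and of the non-isolated-vertex point is sound. For the odd-sized-edge claim your route is genuinely different in form: you observe that $\rank B_G=n-1$ and $B_G\mathbbm{1}=0$ force the right kernel of $B_G$ over $\Z_2$ to be exactly $\{0,\mathbbm{1}\}$, so $\chi_U$ cannot lie in it for $U$ nonempty and proper. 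The paper instead assumes all traces on $U$ are even and splits $B_G$ into the column blocks $B_G[E(G)|U]$ and $B_G[E(G)|U^c]$, each with zero row sums and hence rank at most $|U|-1$ and $|U^c|-1$, contradicting $\rank B_G=m-1$. Both exploit the same rank identity; your kernel formulation is arguably cleaner and makes transparent that the only even-transversal subsets are $\emptyset$ and $V(G)$. Your closing remark about reading the edge count of $G|_U$ with multiplicity (indexing by edges $e$ of $G$ with $e\cap U\neq\emptyset$, since distinct edges may have equal traces) is a fair caveat, and the paper's own proof implicitly adopts the same convention by counting $|F|$.
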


\begin{proof}
Surely (1) comes from (3) of Corollary \ref{nm} as $G$ is square.
Now let $U$ be a nonempty proper subset of $V(G)$.
Let $F$ be the set of edges that intersect $U$ so that $G|_U$ has edges $e \cap U$ for all $e \in F$.
If $F=E(G)$, then $|F|=|V(G)| \ge |U|+1$ as $G$ is square.
Otherwise, we consider the submatrix $B_G[F^c|U^c]$, which has rank $|F^c|$ from the its rows by  Remark \ref{m-1}.
So, $|F^c|=n-|F| \le n-|U|-1$ as each row sum of $B_G[F^c|U^c]$ is zero over $\Z_2$, implying that $|F| \ge |U|+1$.

Assume to the contrary that each edge of $G|_U$ has even size.
 Then $B_G[E(G)|U]$, and $B_G[E(G)|U^c]$ as well, has zero row sums.
So
\begin{eqnarray*}
\rank B_G & \le & \rank B_G[E(G)|U]+ \rank B_G[E(G)|U^c] \\
& \le & |U|-1 + |U^c|-1 = |V(G)|-2=|E(G)|-2,
\end{eqnarray*}
 a contradiction to Theorem \ref{main}(3).
\end{proof}

\begin{cor}\label{pm2}
Let $G$ be a square hypergraph only with even-sized edges and even-degree vertices.
Then $G$ is minimal non-odd-transversal if and only if its dual $G^\ast$ is minimal non-odd-transversal.
\end{cor}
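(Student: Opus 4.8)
The plan is to run everything through the arithmetic criterion of Theorem \ref{main}(3) and the identity $B_{G^\ast} = B_G^\top$, so that ``minimal non-odd-transversal'' becomes a statement about the incidence matrix that is manifestly invariant under transposition. Write $n = |V(G)| = |E(G)|$; the standing hypotheses say precisely that every row sum and every column sum of $B_G$ vanishes over $\Z_2$ (even-sized edges and even-degree vertices, respectively).

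First I would observe that the hypotheses are self-dual. Since $|V(G^\ast)| = |E(G)| = n$ and $|E(G^\ast)| = |V(G)| = n$, the dual is again square, and because $B_{G^\ast} = B_G^\top$ the row (resp.\ column) sums of $B_{G^\ast}$ are the column (resp.\ row) sums of $B_G$, hence all zero; so $G^\ast$ is square with even-sized edges and even-degree vertices, and $(G^\ast)^\ast \cong G$. Under these hypotheses the condition $\sum_e \chi_e = 0$ in Theorem \ref{main}(3) is automatic (it is exactly the even-degree hypothesis), so by Theorem \ref{main} the assertion ``$G$ is minimal non-odd-transversal'' is equivalent to the conjunction: $G$ connected, $n$ odd, and $\rank B_G = n - 1$; and likewise for $G^\ast$.

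Then I would check that these three conditions transfer. The edge count is preserved ($|E(G^\ast)| = n = |E(G)|$), so the parity clause is identical. Rank is transpose-invariant over any field, so $\rank B_{G^\ast} = \rank B_G^\top = \rank B_G$, whence ``$\rank B_{G^\ast} = |E(G^\ast)| - 1$'' is the very same equation as ``$\rank B_G = n - 1$''. For connectedness I would invoke $\Gamma_G \cong \Gamma_{G^\ast}$ together with the elementary fact that a hypergraph without isolated vertices is connected if and only if its incidence bipartite graph is connected; this yields $G$ connected iff $G^\ast$ connected. Combining the three clauses, $G$ satisfies Theorem \ref{main}(3) iff $G^\ast$ does, which is the desired equivalence, using Lemma \ref{conn-cut} to harvest connectedness from minimality and Theorem \ref{main} in the reverse direction to repackage the three conditions back into minimality.

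I expect the genuine obstacle to be the legitimacy of $B_{G^\ast} = B_G^\top$: this identity, and the equality $|E(G^\ast)| = n$, hold only when no two vertices of $G$ lie in exactly the same edges, for otherwise $G^\ast$ acquires a repeated edge and $|E(G^\ast)| < n$, which would break the transfer of the rank clause. I would dispose of this with a null-space argument over $\Z_2$, applied to whichever of $G, G^\ast$ is assumed minimal. If $\rank B_G = n - 1$, then the right null space of $B_G$ over $\Z_2$ is one-dimensional, and since every edge is even it is spanned by $\mathbbm{1}$; the even-edge and odd-$n$ constraints force $n \ge 3$, so if two columns $c_v, c_{v'}$ coincided then $\e_v + \e_{v'}$ would be a nonzero null vector distinct from $\mathbbm{1}$, a contradiction. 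Hence distinct vertices carry distinct incidences, the dual is a bona fide hypergraph with $B_{G^\ast} = B_G^\top$, and $(G^\ast)^\ast \cong G$ closes the symmetry needed for the converse.
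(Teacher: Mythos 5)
Your forward direction ($G$ minimal $\Rightarrow G^\ast$ minimal) is correct, and your null-space argument for the key point --- that $\rank B_G=n-1$, even edge sizes and $n\ge 3$ force the columns of $B_G$ to be pairwise distinct, so that $G^\ast$ is square with $B_{G^\ast}=B_G^\top$ --- is a clean, self-contained substitute for the paper's route, which extracts the same fact from Corollary \ref{pm}(2). You are also more explicit than the paper about transferring connectedness via $\Gamma_G\cong\Gamma_{G^\ast}$. Up to there the two proofs are the same reduction to Theorem \ref{main}(3) plus transpose-invariance of rank.

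The converse is where your proposal has a genuine gap. You plan to run the null-space argument ``on whichever of $G,G^\ast$ is assumed minimal,'' but when $G^\ast$ is the minimal one this does not produce what is needed. Theorem \ref{main}(3) applied to $G^\ast$ gives $\rank B_{G^\ast}=|E(G^\ast)|-1$, whereas your column argument requires the rank to be one less than the number of \emph{columns}, i.e.\ $|V(G^\ast)|-1=n-1$; these coincide only if $|E(G^\ast)|=n$, which is precisely the assertion that no two vertices of $G$ lie in the same edges --- the very thing to be proved. Even formally, distinctness of the columns of $B_{G^\ast}$ would only say that no two \emph{edges} of $G$ have the same vertex set, which is automatic and irrelevant. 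The obstruction is real: let $G$ have vertex set $\{1,\dots,7\}$ and edges $\{1,2,3,4\}$, $\{1,2,3,5\}$, $\{1,2,3,6\}$, $\{1,2,3,7\}$, $\{4,5\}$, $\{4,6\}$, $\{4,7\}$. This $G$ is square with even-sized edges and even degrees; its dual (whose edge set is a set, per the paper's definition) is the connected $2$-regular hypergraph on the seven vertices $e_1,\dots,e_7$ with the five edges $\{e_1,e_2,e_3,e_4\}$, $\{e_1,e_5,e_6,e_7\}$, $\{e_2,e_5\}$, $\{e_3,e_6\}$, $\{e_4,e_7\}$, which is minimal non-odd-transversal by Theorem \ref{main}(4); yet $G$ is not minimal, since the three edges $\{1,2,3,4\},\{1,2,3,5\},\{4,5\}$ already induce a non-odd-transversal proper sub-hypergraph by Lemma \ref{odd-sum}. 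Here the vertices $1,2,3$ share their edge-neighbourhood, $G^\ast$ is not square, and $(G^\ast)^\ast\not\cong G$, so the symmetry you invoke is unavailable. (The paper's own converse suffers from the same circularity: it cites $(G^\ast)^\ast\cong G$, which was established only under the hypothesis that $G$ itself is minimal.) The converse can be repaired only by strengthening the setup, e.g.\ by assuming that distinct vertices of $G$ lie in distinct sets of edges, or by reading the dual as a multi-hypergraph, in which case a repeated edge immediately destroys minimality of $G^\ast$.
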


\begin{proof}
Suppose $G$ is minimal non-odd-transversal with $n$ vertices (edges).
By Corollary \ref{pm}(2), no two vertices of $G$ lie in precisely the same edges of $G$.
So $G^*$ is also square, and $B_{G^\ast}=B^\top_G$.
As each edge of $G$ is even sized, each vertex of $G^*$ has even degree.
So $G^*$ is minimal non-odd-transversal by Theorem \ref{main}.
As $G$ is isomorphic to $(G^\ast)^\ast$, $G$ is minimal non-odd-transversal if $G^\ast$ is.
\end{proof}

\section{minimal non-odd-bipartite regular hypergraphs}
In this section we mainly discuss minimal non-odd-transversal $k$-uniform hypergraphs $G$.
By the following lemma,  $k$ is necessarily even.
So the minimal non-odd-transversal uniform hypergraphs are \emph{exactly} the minimal non-odd-bipartite hypergraphs.

\begin{lem}\label{prep}
Let $G$ be a minimal non-odd-transversal $k$-uniform hypergraphs $G$, which has  $n$ vertices and $m$ edges.
Then $k$ is even.
If $G$ is further $d$-regular, then $d$ is even and $d \le k$.
\end{lem}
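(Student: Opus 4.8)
The plan is to exploit the two combinatorial consequences of minimal non-odd-transversality recorded in Theorem \ref{main}: namely that $m$ is odd and that every vertex of $G$ has even degree (item (4)), equivalently $\sum_{e \in E(G)} \chi_e = 0$ over $\Z_2$ (item (2)). The entire argument is then a double-counting of vertex-edge incidences combined with the inequality $n \ge m$ coming from Corollary \ref{nm}.

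First I would establish that $k$ is even. Since $G$ is $k$-uniform, every row of the incidence matrix $B_G$ has exactly $k$ ones, so the total number of incidences is
\begin{equation*}
mk = \sum_{v \in V(G)} d(v).
\end{equation*}
By Theorem \ref{main}(4) each $d(v)$ is even, so the right-hand side is even, forcing $mk$ to be even. Since Theorem \ref{main} also gives that $m$ is odd, the parity must be carried by the other factor, i.e. $k$ is even.

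Now suppose $G$ is in addition $d$-regular. Then $d = d(v)$ for each vertex $v$, and since every degree is even we immediately obtain that $d$ is even. For the bound $d \le k$, I would first observe that because $k$ is even every edge of $G$ has even size, so $G$ is a minimal non-odd-transversal hypergraph with only even-sized edges and Corollary \ref{nm}(1) applies, yielding $n \ge m$. The regular and uniform double-count now reads $nd = mk$, whence
\begin{equation*}
d = \frac{mk}{n} \le \frac{mk}{m} = k,
\end{equation*}
using $n \ge m > 0$. This completes the argument.

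There is essentially no serious obstacle once Theorem \ref{main} and Corollary \ref{nm} are in hand; the only points requiring care are that the even-degree conclusion be read off from the correct equivalent form in Theorem \ref{main} (item (4), or the vanishing sum in item (2)), and that the hypothesis of Corollary \ref{nm} — only even-sized edges — is exactly what the first part of the theorem supplies. The equality case $d = k$ corresponds to $n = m$, i.e. to the square hypergraphs studied in Corollary \ref{pm}.
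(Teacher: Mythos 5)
Your argument is correct and is essentially identical to the paper's own proof: both deduce that $k$ is even from the parity of $mk=\sum_v d(v)$ together with $m$ odd, obtain $n\ge m$ from Corollary \ref{nm}(1) once all edges are known to have even size, and conclude $d\le k$ from $nd=mk$. No discrepancies to report.
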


\begin{proof}
By Theorem \ref{main}(4), each vertex of $G$ has an even degree so that the sum of degrees of the vertices of $G$ is even, which is equal to $mk$.
As $m$ is odd by Theorem \ref{main}, $k$ is necessarily even, which implies that $n \ge m$ by Corollary \ref{nm}(1).
If $G$ is $d$-regular, $d$ is even by Theorem \ref{main}(4).
Surely  $nd=mk$,  so $d \le k$ as $n \ge m$.
\end{proof}

So, in the following discussion we only deal with non-odd-bipartite regular hypergraphs with even uniformity and even degree.

\subsection{$2$-regular minimal non-odd-bipartite hypergraphs}
It is known that the only minimal non-bipartite simple graph is an odd cycle $C_{2l+1}$, which is $2$-regular.
As a simple generalization, the generalized power hypergraph $C_{2l+1}^{k, {k \over 2}}$ is a $2$-regular minimal non-odd-bipartite $k$-uniform hypergraph.
However, the above hypergraph is not the only $2$-regular minimal non-odd-bipartite $k$-uniform hypergraph.
For example, the following $4$-uniform hypergraph on $10$ vertices with $5$ edges below is minimal non-odd-bipartite:
$$ \{1,2,3,4\}, \{3,4,5,6\}, \{5,6,7,8\}, \{1,7,9,10\}, \{2,8,9,10\}.$$

\begin{lem}\label{2-con}
Let $G$ be a $2$-regular uniform hypergraph with an odd number of edges.
If $G$ is connected, then $G$ is minimal non-odd-bipartite.
\end{lem}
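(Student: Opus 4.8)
The plan is to prove that $G$ is minimal non-odd-transversal, and then upgrade this to minimal non-odd-bipartiteness for free. Indeed, once $G$ is known to be minimal non-odd-transversal, Lemma \ref{prep} forces its uniformity to be even, and for even-uniform hypergraphs the notions of odd-transversal and odd-bipartite coincide (as recorded in the introduction); thus minimal non-odd-transversal and minimal non-odd-bipartite agree here. So it suffices to verify one of the equivalent conditions of Theorem \ref{main}, and I would check condition (4) (equivalently (2)).

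Two of the three requirements are immediate. First, $m$ is odd by hypothesis. Second, every vertex of $G$ has degree exactly $2$, which is even; equivalently, since each vertex lies in exactly two edges, $\sum_{e \in E(G)} \chi_e = 0$ over $\Z_2$ because each vertex is counted twice. The whole substance of the lemma therefore reduces to the last clause of Theorem \ref{main}(4): every nonempty proper edge-induced sub-hypergraph $G|_F$, with $\emptyset \ne F \subsetneq E(G)$, must contain a vertex of odd degree, i.e. $\sum_{e \in F} \chi_e \ne 0$ over $\Z_2$.

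I would establish this by contradiction, and this is the main (really the only) obstacle. Suppose some nonempty proper $F \subsetneq E(G)$ makes every vertex of $G|_F$ have even degree. Since $G$ is $2$-regular, a vertex covered by $F$ has degree $1$ or $2$ in $G|_F$, so evenness forces degree $2$; hence for every vertex $v$ lying in some edge of $F$, \emph{both} edges of $G$ through $v$ belong to $F$. Now pick $e_0 \in F$ and $e_1 \in E(G) \setminus F$ (possible as $F$ is nonempty and proper). Connectedness of $G$ yields a chain of edges $e_0 = f_0, f_1, \ldots, f_s = e_1$ in which consecutive edges share a vertex, so there is a first index $i$ with $f_i \in F$ and $f_{i+1} \notin F$. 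Taking a common vertex $v$ of $f_i$ and $f_{i+1}$, the property above (applied to $v \in f_i \in F$) forces both edges of $G$ through $v$ into $F$, contradicting $f_{i+1} \notin F$. Thus the last clause holds, so $G$ satisfies Theorem \ref{main}(4) and is minimal non-odd-transversal; combined with Lemma \ref{prep} and the odd-transversal/odd-bipartite equivalence, $G$ is minimal non-odd-bipartite. The only delicate point is ensuring the ``both edges through $v$ lie in $F$'' rule is invoked exactly at a boundary of $F$ along a connectivity chain, which is precisely where $2$-regularity and connectedness combine.
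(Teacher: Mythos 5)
Your proof is correct and follows essentially the same route as the paper: verify Theorem \ref{main}(4), with the key observation that connectedness plus $2$-regularity forces any nonempty proper edge-induced sub-hypergraph to contain a vertex of degree $1$ (the paper states this directly via a boundary vertex, while you reach the same point by contradiction along a chain of edges). Your extra remarks on why minimal non-odd-transversal upgrades to minimal non-odd-bipartite match the paper's Lemma \ref{prep} and the surrounding discussion.
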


\begin{proof}
Let $H$ be a nonempty proper edge-induced sub-hypergraph of $G$.
As $G$ is connected, $H$ contains a vertex $v$, which is also contained in some edge not in $H$.
So $v$ has degree $1$ in $H$.
The result follows by Theorem \ref{main}(4).
\end{proof}

Next we give a construction of $2$-regular $k$-uniform hypergraphs, where $k$ is an even integer greater than $2$.

\begin{cons}\label{cons-det}
Let $k$ be an even integer greater than $2$, and let $n,m$ be positive integers such that $n=\frac{km}{2}$.
Let $K_{n,n}$ be a complete bipartite simple graph with two parts $U_1$ and $U_2$, where $U_1=[n]$ and $U_2=\cup_{t=1}^m\{e_t^1,\ldots,e_t^{k \over 2}\}$.
Let $\hat{K}_{n,n}$ be obtained from $K_{n,n}$ by by deleting the edges between the vertices of $V_t:=\{{k \over 2}(t-1)+1,\ldots,{k \over 2}t\}$
 and the vertices of $E_t:=\{e_t^1, \ldots, e_t^{k \over 2}\}$ for $t \in [m]$.

Let $\hat{M}$ be a perfect matching of $\hat{K}_{n,n}$ such that $W_t:=\{i_{t1},\ldots,i_{t,{k \over 2}}\}$ are matched to $E_t$ respectively for $t \in [m]$,
and if $W_{t}=V_{s}$ for some $s \ne t$, then $W_{s}  \ne V_{t}$.

Define a hypergraph $G$ with vertex set $[n]$, whose edges are
\begin{equation}\label{edgeconst-con}
e_t=V_{t} \cup W_{t}, \hbox{~for~} t \in [m].
\end{equation}
 \end{cons}

\begin{lem}\label{cons-pro}
The hypergraph $G$ defined in Construct \ref{cons-det} is a $2$-regular $k$-uniform hypergraph on $n$ vertices.
\end{lem}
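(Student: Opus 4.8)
The plan is to verify the three asserted properties of $G$ one at a time, throughout exploiting that $\hat{M}$ is a \emph{perfect} matching of $\hat{K}_{n,n}$ and that, by construction, every edge of $\hat{K}_{n,n}$ joining $V_t$ to $E_t$ has been deleted. The vertex set is declared to be $[n]$, so $G$ has $n$ vertices outright; what remains is $k$-uniformity and $2$-regularity, the latter also guaranteeing that $G$ has no isolated vertices, as demanded of all hypergraphs in the paper.

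For $k$-uniformity I would fix $t \in [m]$ and examine $e_t = V_t \cup W_t$, where $|V_t| = |W_t| = k/2$. The point to establish is disjointness: $W_t$ consists of those vertices of $U_1 = [n]$ that $\hat{M}$ matches to $E_t$, and since no $V_t$--$E_t$ edge survives in $\hat{K}_{n,n}$, a vertex matched to $E_t$ cannot lie in $V_t$. Hence $W_t \cap V_t = \emptyset$ and $|e_t| = |V_t| + |W_t| = k$. The same reasoning gives $W_s \cap V_s = \emptyset$ for every $s$, a fact I will reuse below.

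For $2$-regularity I would, for a fixed vertex $i \in [n]$, determine precisely which edges contain it. Since $\{V_t\}_{t \in [m]}$ partitions $[n]$, the vertex $i$ lies in $V_t$ for a unique index $t$, giving $i \in e_t$. Since $\hat{M}$ is perfect, $i$ is matched to exactly one vertex of $U_2$, which sits in a unique block $E_s$, so $i \in W_s$ for a unique $s$, giving $i \in e_s$. Moreover $t \ne s$, because $W_s \cap V_s = \emptyset$ forces $i \notin V_s$. Thus $i$ lies in at most the two edges $e_t, e_s$ and in no others, so the degree of $i$ equals $2$ provided these two are genuinely distinct edges of $G$.

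The hard part is exactly this distinctness, and it is where the side condition ``$W_t = V_s \Rightarrow W_s \ne V_t$'' enters. I would show that, for $t \ne s$, the set equality $e_t = e_s$ holds if and only if $W_t = V_s$ and $W_s = V_t$: from $V_t \subseteq e_s = V_s \cup W_s$ together with $V_t \cap V_s = \emptyset$ one gets $V_t \subseteq W_s$, hence $V_t = W_s$ by equal cardinalities, and symmetrically $V_s = W_t$; the converse is immediate. The imposed condition forbids this simultaneous occurrence, so the edges $e_1, \ldots, e_m$ are pairwise distinct as sets. Consequently every vertex lies in exactly two distinct edges, $G$ is $2$-regular, and a count of incidences $2n = km$ confirms that $G$ has exactly $m$ edges of size $k$.
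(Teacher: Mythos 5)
Your proof is correct and follows essentially the same route as the paper's: disjointness $W_t\cap V_t=\emptyset$ from the deleted $V_t$--$E_t$ edges gives $k$-uniformity, the two partitions $\{V_t\}$ and $\{W_t\}$ of $[n]$ give degree exactly $2$, and the side condition rules out $e_s=e_t$ via $V_s=W_t$, $V_t=W_s$. You are somewhat more careful than the paper in spelling out why $e_s=e_t$ forces these two set equalities and in noting that distinctness of the two incident edges is what the degree count requires, but there is no substantive difference.
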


\begin{proof}
As there is no edge between $V_{t}$ and $E_t$ in $\hat{K}_{n,n}$, $W_{t} \cap V_{t}=\emptyset$ for each $t \in [m]$,
So each edge $e_t$ contains exactly $k$ vertices.
Note that $\{V_1,\ldots, V_t\}$ and $\{W_1,\ldots, W_t\}$ both form a $t$-partition of $[n]$.
For each vertex $v$ of $G$, $v \in V_s$ for a unique $s \in [m]$ and $v \in W_t$ for a unique $t \in [m]$, where $t \ne s$ as $V_s \cap W_s =\emptyset$.
So $v$ contained in exactly two edges $e_s$ and $e_t$, implying $v$ has degree $2$.
Finally we note that $G$ contains no multiple edges;
otherwise, if $e_s=e_t$ for $s \ne t$, then $V_s \cup W_s=V_t \cup W_t$,
which implies that $V_s=W_t$ and $V_t=W_s$ as $V_s \cup V_t =\emptyset$ and $W_s \cup W_t =\emptyset$, a contradiction to the assumption.
The result follows.
\end{proof}

\begin{cor}\label{consmth}
Any $2$-regular $k$-uniform hypergraph on $n$ vertices can be constructed as in Construction \ref{cons-det}, where $k$ is even integer greater than $2$.
\end{cor}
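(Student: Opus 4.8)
The plan is to reconstruct, from an arbitrary $2$-regular $k$-uniform hypergraph $G$ with $m$ edges, the partition $\{V_t\}$ and the perfect matching $\hat{M}$ that feed into Construction \ref{cons-det}. First I would record the numerics: counting vertex--edge incidences in two ways gives $2n = km$, so $n = {km \over 2}$, which is exactly the constraint imposed in the construction. It remains to exhibit a labelling of $V(G)$ and a choice of $\hat{M}$ in $\hat{K}_{n,n}$ whose output is $G$.

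The heart of the proof is an orientation argument. I would introduce an auxiliary multigraph $H$ whose vertex set is $\{e_1,\ldots,e_m\}$ and whose edges are the vertices of $G$: since $G$ is $2$-regular and simple, each vertex $v$ lies in exactly two distinct edges $e_a,e_b$, and I let $v$ contribute an edge of $H$ joining $e_a$ and $e_b$ (no loops arise, as the two edges of $v$ are distinct). Then $H$ has $n$ edges and is $k$-regular on $m$ vertices. Because $k$ is even, every vertex of $H$ has even degree, so each connected component of $H$ is Eulerian; orienting its edges along an Euler tour in every component yields an orientation in which in-degree equals out-degree, hence both equal ${k \over 2}$, at each vertex. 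Pulling this back to $G$, I assign each vertex $v$ to the head of its $H$-edge and set $V_t$ to be the set of vertices assigned to $e_t$. Then $V_t \subseteq e_t$, $|V_t| = {k \over 2}$ (the in-degree of $e_t$), and $\{V_1,\ldots,V_m\}$ partitions $V(G)$ since each vertex is assigned to exactly one edge.

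With the partition fixed, I would put $W_t := e_t \setminus V_t$ and verify that $\{W_1,\ldots,W_m\}$ is again a partition: a vertex $v$ contained in $e_a,e_b$ lies in exactly one block, say $V_a$, whence $v \notin V_b$ and so $v \in W_b$, while $v \notin W_a$ and $v$ lies in no other $W_t$. After relabelling the vertices of $G$ so that each $V_t$ becomes the consecutive block $\{{k\over2}(t-1)+1,\ldots,{k\over2}t\}$, I define $\hat{M}$ by matching the ${k\over2}$ vertices of $W_t$ bijectively to $E_t$ for every $t$. Since $W_t \cap V_t = \emptyset$, no matching edge is one of the deleted $V_t$--$E_t$ edges, so $\hat{M}$ is a perfect matching of $\hat{K}_{n,n}$ of the required shape; moreover, because $G$ has no repeated edge, the condition excluding simultaneously $W_t = V_s$ and $W_s = V_t$ holds automatically. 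Construction \ref{cons-det} then outputs precisely the edges $e_t = V_t \cup W_t$, recovering $G$.

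The main obstacle is the balanced-assignment step: one needs a global way to split each edge into two halves of size ${k\over2}$ so that the complementary halves also tile $V(G)$, and the clean device that guarantees this is the Eulerian orientation of $H$, which is exactly where the hypothesis that $k$ (hence every degree of $H$) is even enters. The remaining points---that the $W_t$ form a partition and that the reverse-engineered $\hat{M}$ survives in $\hat{K}_{n,n}$---are then routine checks.
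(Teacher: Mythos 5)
Your proof is correct, but it takes a genuinely different route from the paper's. The paper works with the $\tfrac{k}{2}$-fold edge blow-up $\bar{G}$ of $G$: its incidence bipartite graph $\Gamma_{\bar G}$ is $k$-regular, hence has a perfect matching (this yields the blocks $V_t$ after relabelling), and after deleting the $V_t$--$E_t$ edges the residual graph is still $\tfrac{k}{2}$-regular bipartite, so a second perfect matching yields the $W_t$. You instead pass to the dual multigraph $H$ on the edge set of $G$, observe that $2$-regularity of $G$ makes the vertices of $G$ the edges of $H$ and $k$-uniformity makes $H$ $k$-regular, and then use an Eulerian orientation of each component to split every $e_t$ into the ``in'' half $V_t$ and the ``out'' half $W_t$, each of size $\tfrac{k}{2}$, with both families automatically tiling $V(G)$ because every vertex has exactly one head and one tail. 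Both arguments hinge on the same combinatorial fact --- that one can split each edge into two complementary $\tfrac{k}{2}$-halves so that each family of halves partitions $V(G)$ --- but you obtain it from the evenness of $k$ via Euler tours, whereas the paper obtains it from Hall's/K\H{o}nig's theorem for regular bipartite graphs applied twice. Your route is arguably more self-contained (no blow-up construction, no appeal to Hall's theorem, and the two partitions come out of a single orientation rather than two successive matchings); the paper's route has the advantage of producing the matchings $M$ and $\hat M$ in exactly the form that Construction \ref{cons-det} is phrased in. Your verification of the final non-degeneracy condition (that $W_t=V_s$ and $W_s=V_t$ cannot hold simultaneously, since otherwise $e_t=e_s$) coincides with the paper's.
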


\begin{proof}
Let $G$ be a $2$-regular $k$-uniform hypergraph with $V(G)=[n]$ and $E(G)=\{e_1,\ldots,e_m\}$.
Surely, $n=\frac{km}{2}$.
Let $\bar{G}:=\frac{k}{2} \cdot G $ be a $k$-uniform hypergraph with vertex set $V(G)$ and edge set $\frac{k}{2} \cdot E(G):=\{\frac{k}{2} \cdot e: e \in E(G)\}$,
 where ${k \over 2} \cdot e$ means the $k \over 2$ copies of $e$, written as $e^1,\ldots,e^{k \over 2}$.
Then  $\bar{G}$ is a $k$-uniform $k$-regular multi-hypergraph on $n$ vertices.
The incidence bipartite graph $\Gamma_{\bar{G}}$ of $\bar{G}$ is $k$-regular.

Let $K_{\bar{G}}$ be a complete bipartite graph with two parts $V(\bar{G})$ and $E(\bar{G})$.
Let $\hat{K}_{\bar{G}}$ be obtained from $K_{\bar{G}}$ by deleting the edges
 between the vertices of $V_t:=\{{k \over 2}(t-1)+1,\ldots,{k \over 2}t\}$
 and the vertices of $E_t:=\{e_t^1, \ldots, e_t^{k \over 2}\}$ for $t \in [m]$.
Then $\hat{K}_{\bar{G}}$ is an $(n-{k \over 2})$-regular bipartite graph.

Considering the $k$-regular bipartite graph $\Gamma_{\bar{G}}$, it contains a perfect matching $M$.
By a possible relabeling of the vertices, we may assume that
for $t \in [m]$,
$V_{t}:=\{{k \over 2}(t-1)+1,\ldots,{k \over 2}t\}$ is matched to $E_t:=\{e_t^1,\ldots,e_t^{k \over 2}\}$ in $M$.
By the construction of $\bar{G}$, returning to $G$, $V_{t} \subseteq e_t$ for $t \in [m]$.

Now deleting the edges between the vertices of $V_{t}$ and the vertices of $E_t$ from $\Gamma_{\bar{G}}$ for $t \in [m]$,
we arrive at a ${k \over 2}$-regular bipartite graph denoted by $\hat{\Gamma}_{\bar{G}}$, which is a subgraph of $\hat{K}_{\bar{G}}$.
Now $\hat{\Gamma}_{\bar{G}}$, and hence $\hat{K}_{\bar{G}}$ has a perfect matching $\hat{M}$, where, for $t \in [m]$,
$W_{t}:=\{i_{t1},\ldots,i_{t,{k \over 2}}\}$ is matched to $E_t$ in $\hat{M}$.
So, returning to $G$, $W_{t} \subseteq e_t$ for $t \in [m]$.
As there is no edge between $V_{t}$ and $E_t$ in $\hat{\Gamma}_{\bar{G}}$, $W_{t} \cap V_{t}=\emptyset$ for each $t \in [m]$,
which implies that $e_t =V_{t} \cup W_{t}$ for $ t \in [m]$.

As $G$ contains no multiple edges, if $W_{t}=V_{s}$ for some $s \ne t$,
surely $W_{s}  \ne V_{t}$; otherwise $e_t=e_s=V_t \cup V_{s}$, a contradiction.

From the above discussion, $K_{\bar{G}}$ and $\hat{K}_{\bar{G}}$ are respectively isomorphic to $K_{n,n}$ and $\hat{K}_{n,n}$.
A perfect matching $\hat{M}$ in $\hat{K}_{\bar{G}}$ is isomorphic to a perfect matching in $\hat{K}_{n,n}$.
So $G$ can be constructed as in Construction \ref{cons-det}.
\end{proof}

\begin{thm}
Let $G$ be a $2$-regular $k$-uniform hypergraphs with $n$ vertices and $m$ edges, where $m$ is odd and $k$ is even.
Then $G$ is a minimal non-odd-bipartite hypergraph if and only if $G$ can be constructed as in Construction \ref{cons-det} and $G$ is connected.
\end{thm}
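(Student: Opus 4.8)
The plan is to observe that both directions of the equivalence hinge on a single statement—that $G$ is minimal non-odd-bipartite if and only if $G$ is connected—because the constructibility clause is automatic. Indeed, by Corollary \ref{consmth}, every $2$-regular $k$-uniform hypergraph on $n$ vertices (with $k$ even and greater than $2$) can already be realized as in Construction \ref{cons-det}. Hence, under the standing hypotheses on $G$, the right-hand condition ``$G$ can be constructed as in Construction \ref{cons-det} and $G$ is connected'' collapses to ``$G$ is connected,'' and the theorem reduces to proving that $G$ is minimal non-odd-bipartite if and only if $G$ is connected.

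For the backward implication, I would assume $G$ is connected. Since $G$ is a $2$-regular uniform hypergraph with an odd number $m$ of edges, Lemma \ref{2-con} applies directly and yields that $G$ is minimal non-odd-bipartite. Together with the automatic constructibility from Corollary \ref{consmth}, this recovers the left-hand side from the right-hand side.

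For the forward implication, I would assume $G$ is minimal non-odd-bipartite. Because $k$ is even, the notions of odd-bipartite and odd-transversal coincide, so $G$ is minimal non-odd-transversal; Lemma \ref{conn-cut} then forces $G$ to be connected (indeed, free of cut vertices). Invoking Corollary \ref{consmth} once more for the constructibility clause produces the full right-hand side.

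The genuine content of the argument has already been discharged by the preceding results: the matching-and-regularity analysis of the incidence bipartite graph underlying Corollary \ref{consmth}, and the degree characterization of Theorem \ref{main}(4) underlying Lemma \ref{2-con}. Thus the theorem itself is an assembly step rather than a fresh computation, and the only point requiring care is the bookkeeping that ``minimal non-odd-bipartite'' and ``minimal non-odd-transversal'' are interchangeable under even uniformity, so that both Lemma \ref{2-con} and Lemma \ref{conn-cut} may legitimately be invoked. I expect no substantive obstacle beyond making this identification explicit.
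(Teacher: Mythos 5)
Your proposal is correct and follows essentially the same route as the paper: sufficiency via Lemma \ref{2-con} and necessity via Corollary \ref{consmth} together with Lemma \ref{conn-cut}. Your observation that Corollary \ref{consmth} makes the constructibility clause automatic (so the equivalence reduces to connectivity) is a slightly cleaner packaging of the same argument.
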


\begin{proof}
The sufficiency follows from Lemmas \ref{cons-pro} and \ref{2-con}, and the necessity follows from Corollary \ref{consmth} and Lemma \ref{conn-cut}.
\end{proof}

\begin{rmk}
The hypergraph constructed as in Construction \ref{cons-det} may not be connected.
However, by Lemma \ref{2-con} at least one component is minimal non-odd-bipartite as the total number of edges is odd.
For example, the following $4$-uniform hypergraph $G$ on $18$ vertices with edges
$$e_t=\{2t-1, 2t, 2t+5,2t+6\}, t \in [9],$$
where the labels of the vertices are modulo $18$.
$G$ has $3$ connected components $G_1,G_2,G_3$ with edge sets listed below, each of which is isomorphic to $C_3^{4,2}$ (a  minimal non-odd-bipartite hypergraph).
$$\begin{array}{clll}
 E(G_1):&  \{1,2,7,8\}, & \{7,8,13,14\}, & \{13,14,1,2\}. \\
 E(G_2): &  \{3,4,9,10\}, & \{9,10,15,16\}, &\{15,16,3,4\}. \\
 E(G_3): &  \{5,6,11,12\}, & \{11,12, 17,0\}, &\{17,0,5,6\}.
\end{array}
$$

In Fig. \ref{illu} we give an illustration of $G$ constructed as in the way of Construction \ref{cons-det},
where the dotted lines indicate a perfect matching in $K_{18,18}$,
and the solid lines indicate a perfect matching in $\hat{K}_{18,18}$.

\begin{figure}
\centering
\setlength{\unitlength}{1bp}%
  \begin{picture}(1061.31, 90.15)(0,0)
  \put(0,0){\includegraphics{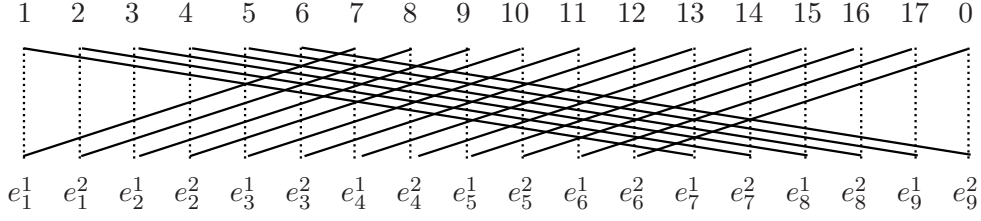}}
    \end{picture}%
\caption{An illustration of Construction \ref{cons-det}}\label{illu}
\end{figure}

\end{rmk}

\subsection{Examples of $d$-regular minimal non-odd-bipartite hypergraphs}

We first give an example of $k$-regular $k$-uniform minimal non-odd-bipartite hypergraph by using Cayley hypergraph.
Let $G=(\Z_n; \{1,2,\ldots,k-1\})$ be a \emph{Cayley hypergraph}, where $V(G)=\Z_n$, and $E(G)$ consists of edges $\{i,i+1,\ldots,i+k\}$ for $i \in \Z_n$.
Then $G$ is connected, $k$-uniform and $k$-regular, with $n$ vertices and $n$ edges.

\begin{thm}\label{d-reg}
Let $k$ be an even integer greater than $2$, and $n$ be an odd integer greater than $k$.
The $G=(\Z_n; \{1,2,\ldots,k-1\})$  is minimal non-odd-bipartite if and only if $\gcd(k,n)=1$.
\end{thm}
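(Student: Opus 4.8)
The plan is to reduce the statement to the rank criterion of Theorem \ref{main}(3) and then compute that rank from the circulant structure of $B_G$. First I would record that $G$ is connected, has $m=n$ edges with $n$ odd, and is $k$-regular with $k$ even. Thus two of the three requirements in Theorem \ref{main}(3) hold for free: $m$ is odd by hypothesis, and $\sum_{e\in E(G)}\chi_e=0$ over $\Z_2$ because every vertex has even degree $k$. Since $G$ is uniform of even uniformity, being minimal non-odd-transversal is the same as being minimal non-odd-bipartite (as noted before Lemma \ref{prep}). Hence the whole theorem collapses to the single assertion that $\rank B_G=n-1$ over $\Z_2$ if and only if $\gcd(k,n)=1$.

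Next I would identify $B_G$ as a circulant matrix: ordering the vertices and the edges by the elements of $\Z_n$, the row $\chi_{e_i}$ is the cyclic shift by $i$ of $\chi_{e_0}$, the indicator of $\{0,1,\dots,k-1\}$. The associated circulant polynomial over $\Z_2$ is $c(x)=1+x+\cdots+x^{k-1}$. I would then invoke the standard fact that a circulant matrix over a field has $\rank=n-\deg\gcd\bigl(c(x),x^n-1\bigr)$, noting $x^n-1=x^n+1$ over $\Z_2$. So it suffices to prove the general formula $\deg\gcd\bigl(c(x),x^n+1\bigr)=\gcd(k,n)$, after which $\rank B_G=n-\gcd(k,n)$ and the equivalence with $\gcd(k,n)=1$ is immediate.

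The crux is this gcd computation, and I would carry it out through three ingredients over $\Z_2$: the factorization $(x+1)c(x)=x^k+1$; the identity $\gcd\bigl(x^k+1,x^n+1\bigr)=x^{\gcd(k,n)}+1$; and the squarefreeness of $x^n+1$, which holds because $n$ is odd. Since $k$ is even we have $c(1)=k\equiv 0$, so $(x+1)\mid c(x)$. Because $x^n+1$ is squarefree, $\gcd(\,\cdot\,,x^n+1)$ depends only on the set of irreducible factors of its first argument; as $(x+1)$ already divides $c(x)$, multiplying $c(x)$ by the extra factor $x+1$ does not enlarge that set. Therefore $\gcd\bigl(c(x),x^n+1\bigr)=\gcd\bigl((x+1)c(x),x^n+1\bigr)=\gcd\bigl(x^k+1,x^n+1\bigr)=x^{\gcd(k,n)}+1$, giving $\deg\gcd\bigl(c(x),x^n+1\bigr)=\gcd(k,n)$.

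The main obstacle, and the step deserving the most care, is exactly the passage from $\gcd((x+1)c(x),x^n+1)$ back to $\gcd(c(x),x^n+1)$: it relies on $x^n+1$ being squarefree so that the gcd detects only \emph{which} irreducibles are common, not their multiplicities, together with $(x+1)\mid c(x)$. As a cross-check I would verify the whole count through roots of unity in $\overline{\Z_2}$: writing $k=2^a k'$ with $k'$ odd, the roots of $c(x)$ are the $k'$-th roots of unity (with $x=1$ of reduced multiplicity), those of $x^n+1$ are the distinct $n$-th roots of unity, and their common roots are the $\gcd(k',n)=\gcd(k,n)$ roots of unity of order dividing $\gcd(k,n)$, again yielding $\deg\gcd=\gcd(k,n)$. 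I would also make sure to justify the circulant rank formula and the $\gcd(x^k+1,x^n+1)$ identity, both of which are standard over $\Z_2$.
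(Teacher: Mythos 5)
Your proposal is correct, and it reaches the paper's key identity $\rank B_G=n-\gcd(k,n)$ over $\Z_2$ by a genuinely different route. The reduction to Theorem \ref{main}(3) is identical (connectivity, $m=n$ odd, all degrees equal to the even number $k$, and the identification of minimal non-odd-transversal with minimal non-odd-bipartite for even uniformity), but the rank computation differs. The paper works directly with the kernel of $B_G$: subtracting the equations of two consecutive edges gives $x_i=x_{i+k}$, Bezout's identity $pk+qn=t:=\gcd(k,n)$ upgrades this to $x_i=x_{i+t}$, and conversely every vector of period $t$ lies in the kernel because $k/t$ is even (here is where $n$ odd and $k$ even enter), so the kernel has dimension exactly $t$. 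You instead treat $B_G$ as a circulant over $\Z_2$ with symbol $c(x)=1+x+\cdots+x^{k-1}$ and compute $\deg\gcd\bigl(c(x),x^n+1\bigr)=\gcd(k,n)$, using $(x+1)c(x)=x^k+1$, the identity $\gcd\bigl(x^k+1,x^n+1\bigr)=x^{\gcd(k,n)}+1$, squarefreeness of $x^n+1$ (from $n$ odd), and $(x+1)\mid c(x)$ (from $k$ even); the delicate step of removing the extra factor $x+1$ is handled correctly via squarefreeness, and your root-of-unity cross-check is sound. The two parity hypotheses are used at exactly analogous points in the two arguments. Your polynomial route is shorter once the circulant rank formula and the $\gcd$ identity are granted, and it generalizes immediately to any circulant connection set; the paper's argument is more elementary and self-contained, needing nothing beyond Bezout and explicit linear algebra. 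Either is acceptable, though if you keep the circulant approach you should include proofs or precise references for the rank formula and for $\gcd\bigl(x^k+1,x^n+1\bigr)=x^{\gcd(k,n)}+1$, as you note.
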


\begin{proof}
By Theorem \ref{main}(3), it suffices to show that $\rank B_G=n-1$ over $\Z_2$ if and only if $\gcd(k,n)=1$.
Consider the equation $B_G x=0$ over $\Z_2$.
For each $i \in\Z_n$, as $\{i,\ldots, i+k-1\}$ and $\{i+1,\ldots,k\}$ are edges of $G$, by the above equation we have
$$ x_i+\cdots+x_{i+k-1}=0, x_{i+1}+\cdots+x_{i+k}=0.$$
So $x_i=x_{i+k}$ for each $i \in \Z_n$.
Let $t:=\gcd(k,n)$.
Then there exist integers $p,q$ such that $pk+qn=t$.
Note that $t$ is odd as $n$ is odd, and if writing $k=st$, then $s$ is even as $k$ is even.

For each $i \in \Z_n$,
$$ x_i=x_{i+k}=\cdots=x_{i+pk}=x_{i+t-qn}=x_{i+t}.$$
As $s$ is even, for any $x_1,\ldots,x_t \in \Z_2$, and any edge
 $$x_1+\cdots+x_k=(x_1+\cdots+x_t)+\cdots+(x_{(s-1)t+1}+\cdots+x_{st})=s(x_1+\cdots+x_t)=0.$$
So, the solution space of $B_G x=0$ over $\Z_2$ has dimension $t$, which implies that $\rank B_G=n-t$ over $\Z_2$.
The result now follows.
\end{proof}

Let $G$ be a $k$-uniform hypergraph with $n$ vertices and $m$ edges.
Let $G^1, G^2, \ldots, G^t$ be $t$ disjoint copies of $G$.
For each vertex $v$ (or each edge $e$) of $G$, it has $t$ copies $v^1,\ldots,v^t$ (or $e^1,\ldots,e^t$) in $G^1,\ldots,G^t$ respectively.
Let $t \circ G$ be a hypergraph whose vertex set is $\cup_{i=1}^t V(G^i)$, and edge set is $\{e^1 \cup \cdots \cup e^t: e \in E(G)\}$.
Then $t \circ G$ is $t k$-uniform hypergraph with $t n$ vertices and $m$ edges,
and the degree of $v^i$ in $t \circ G$ is same as the degree of $v$ in $G$ for each $v \in V(G)$ and $i \in [t]$.
If further $G$ is $d$-regular, then $t \circ G$ is also $d$-regular.

\begin{lem}\label{tG}
Let $G$ be a $k$-uniform hypergraph.
Then $G$ is minimal non-odd-bipartite if and only if $t \circ G$ is minimal non-odd-bipartite.
\end{lem}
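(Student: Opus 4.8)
\emph{Plan.} Since $G$ is $k$-uniform and $t\circ G$ is $tk$-uniform, both hypergraphs are uniform, so by the equivalence between ``minimal non-odd-transversal'' and ``minimal non-odd-bipartite'' for uniform hypergraphs recorded at the beginning of this section, it suffices to prove that $G$ is minimal non-odd-transversal if and only if $t\circ G$ is. I will do this by transporting the algebraic characterization of Theorem \ref{main}(2) along the operation $G\mapsto t\circ G$.

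The starting point is the behaviour of the indicator vectors. The map $e\mapsto \hat e:=e^1\cup\cdots\cup e^t$ is a bijection from $E(G)$ onto $E(t\circ G)$, so in particular $t\circ G$ has exactly $m$ edges. Identifying each copy $V(G^i)$ with $V(G)$, the restriction of $\chi_{\hat e}\in\Z_2^{V(t\circ G)}$ to the $i$-th copy is precisely $\chi_e$ for every $i\in[t]$; that is, $\chi_{\hat e}$ is the concatenation of $t$ copies of $\chi_e$. Consequently, for any $F\subseteq E(G)$ with image $\hat F=\{\hat e:e\in F\}$, the vector $\sum_{\hat e\in\hat F}\chi_{\hat e}$ restricts to $\sum_{e\in F}\chi_e$ on each copy, so
\[
\sum_{\hat e\in\hat F}\chi_{\hat e}=0 \ \text{ over } \Z_2 \iff \sum_{e\in F}\chi_e=0 \ \text{ over } \Z_2 .
\]
Because $e\mapsto\hat e$ preserves cardinality and the properties of being nonempty and proper, the three clauses of Theorem \ref{main}(2) --- that $m$ is odd, that the full sum of indicator vectors vanishes, and that no nonempty proper subset of edges has vanishing indicator sum --- hold for $G$ if and only if they hold for $t\circ G$.

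It remains to match the connectedness hypothesis of Theorem \ref{main}. Here I would use that the second and third clauses of condition (2) already force connectedness: if a hypergraph with no isolated vertices were disconnected, its edge set would split into the (nonempty, proper) edge sets $F_1,\dots,F_r$ of its components, whose indicator vectors have pairwise disjoint supports, so a vanishing total sum would force $\sum_{e\in F_1}\chi_e=0$, contradicting the third clause. Since $t\circ G$ inherits the absence of isolated vertices from $G$ (each $v^i$ lies in $\hat e$ whenever $v\in e$), this observation applies to both hypergraphs. Now the argument closes: if $G$ is minimal non-odd-transversal, then $G$ is connected by Lemma \ref{conn-cut} and satisfies Theorem \ref{main}(2); the displayed equivalence transports (2) to $t\circ G$, which is then connected by the observation just made, so Theorem \ref{main} makes $t\circ G$ minimal non-odd-transversal. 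The reverse implication is symmetric.

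The only genuinely delicate point is this last, bookkeeping step: Theorem \ref{main} is stated for \emph{connected} hypergraphs, so one cannot apply its $(2)\Rightarrow(1)$ direction to $t\circ G$ before knowing that $t\circ G$ is connected. Everything else reduces to the transparent block-repetition identity for the indicator vectors; that identity is where all the content sits, and once it is in hand the equivalence of the characterizing conditions is immediate.
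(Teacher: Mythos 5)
Your proposal is correct and rests on the same key observation as the paper's proof, namely that under the natural identification $B_{t\circ G}=(B_G,B_G,\ldots,B_G)$, so all the conditions of Theorem \ref{main} transfer verbatim; you invoke clause (2) where the paper invokes the equivalent rank condition (3). Your extra remark that the algebraic conditions force connectedness (so that Theorem \ref{main} is legitimately applicable to $t\circ G$) is a small point the paper glosses over, but it does not change the substance of the argument.
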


\begin{proof}
By a suitable labeling of the vertices of $t \circ G$,
we have $B_{t \circ G}=(B_G, B_G, \ldots, B_G)$, where $B_G$ occurs $t$ times in the latter matrix.
As $\rank B_G=\rank B_{t \circ G}$, the result follows by Theorem \ref{main}(3).
\end{proof}

Next we give an example of $d$-regular $k$-uniform minimal non-odd-bipartite hypergraph $G$ with $n$ vertices and $m$ edges,
where $m$ is odd and $d$ is even such that $\gcd(d,m)=1$.
Obviously $nd=km$, and $d|k$ as $\gcd(d,m)=1$.
Suppose $k=t d$, where $t >1$.
By Theorem \ref{d-reg}, the hypergraph $H=(\Z_m; \{1,2,\ldots,d-1\})$ is minimal non-odd-bipartite, which is $d$-regular, $d$-uniform, with $m$ edges.
By Lemma \ref{tG}, $t \circ H$ is minimal non-odd-bipartite with $m$ edges, which is $d$-regular and $td(=k)$-uniform.
%, with $n=mt$ vertices and $m$ edges.

\begin{cor}\label{d-reg-cir}
Let $H=(\Z_m; \{1,2,\ldots,d-1\})$, where $m$ is odd and $d$ is even such that $\gcd(d,m)=1$.
Then $t \circ H$ is minimal non-odd-bipartite with $m$ edges, which is $d$-regular and $t d$-uniform.
\end{cor}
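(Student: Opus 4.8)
The plan is to derive the statement directly from the two results immediately preceding it, namely Theorem \ref{d-reg} and Lemma \ref{tG}, together with the numerical bookkeeping already built into the definition of the blow-up $t \circ G$. First I would recognize $H = (\Z_m; \{1, \ldots, d-1\})$ as exactly the Cayley hypergraph of Theorem \ref{d-reg} with the roles of $(k, n)$ played by $(d, m)$: it is connected, $d$-uniform, $d$-regular, with $m$ vertices and $m$ edges. Since $d$ is even, $m$ is odd, and $\gcd(d, m) = 1$ by hypothesis, the equivalence in Theorem \ref{d-reg} applies and yields that $H$ itself is minimal non-odd-bipartite.

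Next I would transfer this property to $t \circ H$ by invoking Lemma \ref{tG} with $G = H$: since minimal non-odd-bipartiteness is preserved under the blow-up in both directions, $t \circ H$ is minimal non-odd-bipartite. The remaining quantitative assertions then follow from the definition of $t \circ G$ with no further computation. Its edges correspond bijectively to those of $H$ (one edge $e^1 \cup \cdots \cup e^t$ per $e \in E(H)$), so $t \circ H$ has $m$ edges; each such edge contains $td$ vertices because $H$ is $d$-uniform, so $t \circ H$ is $td$-uniform; and the construction preserves regularity, so $t \circ H$ is $d$-regular.

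No substantial obstacle is expected, since the corollary is essentially a repackaging of Theorem \ref{d-reg} and Lemma \ref{tG}. The only point that requires care is confirming that the hypotheses of Theorem \ref{d-reg} are genuinely satisfied after the substitution $(k, n) \mapsto (d, m)$---in particular that $d$ exceeds $2$ and that $m$ exceeds $d$, so that $H$ is a well-defined multiplicity-free hypergraph---and that every parameter claim ($m$ edges, $td$-uniformity, $d$-regularity) is truly inherited from $H$ under the $t \circ$ construction rather than needing independent verification.
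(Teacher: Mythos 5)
Your proposal is correct and takes essentially the same route as the paper: the corollary is established in the paragraph immediately preceding it by applying Theorem \ref{d-reg} to $H=(\Z_m;\{1,\ldots,d-1\})$ with $(k,n)$ replaced by $(d,m)$, and then invoking Lemma \ref{tG} to transfer minimal non-odd-bipartiteness to $t\circ H$, with the parameter claims read off from the definition of $t\circ G$. Your caveat about checking the implicit hypotheses $d>2$ and $m>d$ of Theorem \ref{d-reg} is a reasonable point that the paper glosses over (the $d=2$ case, where $H$ is an odd cycle, is noted separately right after the corollary).
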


Note that in Corollary \ref{d-reg-cir}, if $d=2$, then $H$ is an odd cycle $C_m$, and $t \circ C_m=C_m^{2t,t}$ (a generalized power hypergraph), both of which are minimal non-odd-bipartite.

Thirdly we use a projective plane $(X,\B)$ of order $q$ to construct a regular minimal non-odd-bipartite hypergraph.
Recall a \emph{projective plane} of order $q$ consists of a set $X$ of $q^2+q+1$ elements called \emph{points},
and a set $\B$ of $(q+1)$-subsets of $X$ called \emph{lines}, such that any two points lie on a unique line.
It can be derived from the definition that any points lies on $q+1$ lines, and two lines meet in a unique point, and there are $q^2+q+1$ lines.
Now define a hypergraph based on $(X,\B)$, denoted by $G=(X,\B)$, whose vertices are the points of $X$ and edges are the lines of $\B$.
Then $G=(X,\B)$ is a $(q+1)$-regular $(q+1)$-uniform  hypergraph with $q^2+q+1$ vertices.

\begin{thm}\label{pp}
Let $(X,\B)$ be a  projective plane of order $q$, and let $G=(X,\B)$ be a hypergraph defined as in the above.
If $q$ is odd, then $G=(X,\B)$ is minimal non-odd-bipartite.
\end{thm}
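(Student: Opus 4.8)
The plan is to verify the three conditions of Theorem \ref{main}(3), after first noting that $G$ is trivially connected: any two points lie on a common line, so any two vertices already share an edge. I would begin by recording the parameters. By the standing description $G$ has $n = q^2+q+1$ vertices and $m = q^2+q+1$ edges, each edge has size $q+1$, and each vertex has degree $q+1$. Since $q$ is odd, $q+1$ is even and $m = q^2+q+1$ is odd; the evenness of every degree is exactly $\sum_{e \in E(G)} \chi_e = 0$ over $\Z_2$. This disposes of the first two conditions at once, so the whole theorem reduces to showing $\rank B_G = m-1 = q^2+q$ over $\Z_2$.

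For the upper bound I would observe that every edge has even size $q+1$, so each row sum of $B_G$ vanishes over $\Z_2$; equivalently $B_G \mathbbm{1} = 0$, forcing $\rank B_G \le n-1 = m-1$.

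The crux is the matching lower bound $\rank B_G \ge m-1$, and here the plan is to pass to the Gram-type matrix $B_G B_G^{\top}$. Its $(e,e')$ entry counts the common vertices of the edges $e$ and $e'$: this equals $q+1$ when $e = e'$, and equals exactly $1$ when $e \ne e'$, since two lines of a projective plane meet in a unique point. Hence $B_G B_G^{\top} = qI + J$ over $\Z$, with $J$ the all-ones matrix. Reducing modulo $2$ and using that $q$ is odd gives $B_G B_G^{\top} = I + J$ over $\Z_2$. It then remains to compute $\rank(I+J)$ over $\Z_2$: the equation $(I+J)x = 0$ rearranges to $x = \bigl(\sum_i x_i\bigr)\mathbbm{1}$, whose only solutions over $\Z_2$ with $n$ odd are $x = 0$ and $x = \mathbbm{1}$, so the kernel is one-dimensional and $\rank(I+J) = n-1$. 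Combining $\rank B_G \ge \rank(B_G B_G^{\top}) = n-1$ with the upper bound yields $\rank B_G = m-1$.

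Feeding the three verified conditions into Theorem \ref{main}(3) shows $G$ is minimal non-odd-transversal; since $G$ is $(q+1)$-uniform with $q+1$ even, Lemma \ref{prep} identifies it as minimal non-odd-bipartite. The step I expect to be the main obstacle is the lower bound on $\rank B_G$: the projective-plane axioms enter precisely through the identity $B_G B_G^{\top} = qI + J$, and the parity hypothesis that $q$ is odd is exactly what makes the integer Gram matrix reduce to $I+J$ modulo $2$ and makes its $\Z_2$-kernel one-dimensional. Working with this Gram matrix, rather than attempting to read off the $2$-rank of the incidence matrix directly, is what keeps the argument self-contained.
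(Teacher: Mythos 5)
Your proof is correct and rests on the same key idea as the paper's: the projective-plane axioms give a Gram matrix of the form $qI+J$, and the hypothesis that $q$ is odd controls its reduction mod $2$. The only (harmless) difference is that the paper deletes one line first and shows $\det\bigl(B_{G-e}B_{G-e}^{\top}\bigr)=(q^2+2q)q^{q^2+q-1}$ is odd, whereas you keep the full $B_GB_G^{\top}=qI+J$ (whose integer determinant would be even) and instead compute the one-dimensional $\Z_2$-kernel of $I+J$ directly; both yield $\rank B_G=m-1$ and feed into Theorem \ref{main}(3) identically.
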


\begin{proof}
Let $e$ be an edge of $G=(X,\B)$ or a line of $(X,\B)$.
Then
$$B_{G-e}B_{G-e}^\top=qI+J,$$
where $I$ is the identity matrix, and $J$ is an all-ones matrix, both of size $q^2+q$.
So $$\det B_{G-e}B_{G-e}^\top=\det (qI+J)=(q^2+2q)q^{q^2+q-1} \equiv 1 \mod 2,$$
implying that $\rank B_G=m-1$ over $\Z_2$.
The result follows by Theorem \ref{main}(3).
\end{proof}

It is known that if $q$ is an odd prime power, then there always exists a projective plane of order $q$ by using the vector space $\mathbb{F}_q^3$.
By Lemma \ref{tG} and Theorem \ref{pp}, we easily get the following result.

\begin{cor}\label{tpp}
Let $q$ be an odd prime power.
There exists a $(q+1)$-regular $(q+1)$-uniform minimal non-odd-bipartite hypergraph $G$ with $q^2+q+1$ edges.
For any positive integer $t>1$, there exists a $(q+1)$-regular $t(q+1)$-uniform minimal non-odd-bipartite hypergraph with $q^2+q+1$ edges.
\end{cor}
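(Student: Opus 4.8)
The plan is to deduce Corollary \ref{tpp} directly from the two preceding results, \textbf{Theorem \ref{pp}} and \textbf{Lemma \ref{tG}}, without reopening any $\Z_2$-rank computation. First I would invoke the standard fact, stated in the excerpt just above the corollary, that whenever $q$ is an odd prime power there exists a projective plane of order $q$ (explicitly realized via the vector space $\mathbb{F}_q^3$: points are the $1$-dimensional subspaces and lines the $2$-dimensional subspaces). This gives a concrete $(X,\B)$ of order $q$ to feed into Theorem \ref{pp}.

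Next I would apply Theorem \ref{pp} to this $(X,\B)$. Since $q$ is an odd prime power it is in particular odd, so the hypothesis of Theorem \ref{pp} is met, and the associated hypergraph $G=(X,\B)$ is minimal non-odd-bipartite. From the paragraph preceding Theorem \ref{pp} we already know $G=(X,\B)$ is $(q+1)$-regular, $(q+1)$-uniform, with $q^2+q+1$ vertices; and since there are exactly $q^2+q+1$ lines, it has $q^2+q+1$ edges. This establishes the first assertion of the corollary verbatim.

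For the second assertion I would fix any integer $t>1$ and form $t\circ G$ as defined in the discussion preceding Lemma \ref{tG}. By that discussion, $t\circ G$ is $t(q+1)$-uniform with $tn$ vertices and the \emph{same} edge count $q^2+q+1$ as $G$, and it inherits $d$-regularity from $G$, hence is $(q+1)$-regular. Lemma \ref{tG} then gives that $t\circ G$ is minimal non-odd-bipartite precisely because $G$ is, since $B_{t\circ G}=(B_G,\ldots,B_G)$ has the same $\Z_2$-rank as $B_G$. Assembling these facts yields a $(q+1)$-regular $t(q+1)$-uniform minimal non-odd-bipartite hypergraph with $q^2+q+1$ edges, as claimed.

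There is essentially no hard step here: the corollary is a bookkeeping combination of Theorem \ref{pp} (which supplies the base example and absorbs the only genuine computation, the parity of $\det(qI+J)$) and Lemma \ref{tG} (which supplies the uniformity-multiplying operation while preserving minimal non-odd-bipartiteness). The only point requiring a word of care is confirming that the edge count and the regularity parameter are left unchanged by the $t\circ(\cdot)$ construction while the uniformity scales by $t$; both follow immediately from the stated properties of $t\circ G$. Thus the expected obstacle is nil, and the proof is a short two-sentence citation of the preceding lemma and theorem, exactly as the sentence introducing the corollary (\emph{``By Lemma \ref{tG} and Theorem \ref{pp}, we easily get the following result''}) anticipates.
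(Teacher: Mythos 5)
Your proposal is correct and follows exactly the paper's route: the paper proves the corollary by citing the existence of a projective plane of order $q$ for odd prime powers $q$, applying Theorem \ref{pp} for the base case, and applying Lemma \ref{tG} for the $t\circ G$ construction. Your added bookkeeping (edge count, regularity, and uniformity under $t\circ(\cdot)$) is accurate and merely makes explicit what the paper leaves implicit.
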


\begin{rmk}
From Corollaries \ref{d-reg} and \ref{tpp}, the minimal non-odd-bipartite hypergraphs $G$ have degree $d$ and edge number $m$ such that $\gcd(d,m)=1$.
(Note that $\gcd(q+1,q^2+q+1)=1$.)
As $\gcd(d,m)=1$, from the equality $nd=mk$, we have $d \mid k$, where $n,k$ are the number of vertices and the uniformity of $G$ respectively.

In fact, there exist $d$-regular minimal non-odd-bipartite hypergraphs with $m$ edges such that $\gcd(d,m)>1$.
For example, let $G$ be a $6$-uniform $6$-regular hypergraph with $9$ edges below:
$$\begin{array}{ccccc}
  \{1,2,3,4,5,6\}, & \{1,4,5,6,7,9\}, & \{1,3,5,6,7,8\}, & \{1,2,4,6,7,8\}, & \{1,3,5,7,8,9\}, \\
  \{1,2,6,7,8,9\}, & \{2,3,4,5,7,9\}, &\{2,3,4,5,8,9\}, &\{2,3,4,6,8,9\}. & ~
\end{array}
$$
By Theorem \ref{main}, it is also easy to verify that $G$ is minimal non-odd-bipartite.

There also exist $d$-regular $k$-uniform minimal non-odd-bipartite hypergraphs such that $d \nmid k$.
For example, let $G$ be a $6$-regular $8$-uniform hypergraph with $9$ edges below:
$$\begin{array}{lll}
  \{1,2,3,4,5,6,7,8\}, & \{1,3,4,5,6,7,9,11\}, & \{1,4,5,6,7,8,9,10\},  \\
  \{1,5,7,8,9,10,11,12\}, & \{1,2,3,6,7,9,10,12\}, & \{1,2,4,5,8,10,11,12\},   \\
   \{2,3,4,6,8,10,11,12\}, & \{2,3,4,5,8,9,11,12\}, & \{2,3,6,7,9,10,11,12\}.
\end{array}
$$
By Theorem \ref{main}, it is easy to verify that $G$ is minimal non-odd-bipartite.
\end{rmk}

\begin{exm}The minimal non-odd-bipartite uniform hypergraphs with fewest edges.
By Theorem \ref{main}, if $G$ is a $k$-uniform minimal non-odd-bipartite hypergraph with $n$ vertices and $m$ edges, then $m$ is odd.
If $m=1$, $G$ is surely odd-bipartite. So, $m \ge 3$, and hence the maximum degree is at most $3$ if $m=3$.
Assume that $m=3$.
By Theorem \ref{main}, each vertex has an even degree, implying that $G$ is $2$-regular.
So $2n=3k$, and $3|n$.
Letting $n=3l$, we have $k=2l$.
So $G=C_3^{2l, l}$, which is the unique example of minimal non-odd-bipartite hypergraph with $3$ edges.
It is consistent with the fact that $C_3$ is the unique minimal non-bipartite simple graph with $3$ edges by taking $k=2$.
\end{exm}

\begin{exm}
The minimal non-odd-bipartite uniform hypergraphs with fewest vertices.
If $G$ is a $k$-uniform minimal non-odd-bipartite hypergraph with $n$ vertices and $m$ edges.
Then $n \ge k+1$, as an edge is odd-bipartite.
Assume that $n=k+1$.
Then $m \le {k+1 \choose k}=k+1$, with equality if and only if $G$ is a \emph{$(k+1)$-simplex} \cite{CD}, i.e. any $k$ vertices of $G$ forms an edge.
Let $\Delta$ be the maximum degree of $G$, which is even by Theorem \ref{main}.
As $m$ is odd by Theorem \ref{main}, we have
$$m-1 \ge \Delta \ge \frac{km}{k+1}=m-\frac{m}{k+1} \ge m-1,$$
which implies that $m=k+1$ and $k$ is even.
So, the $(k+1)$-simplex is the unique example of $k$-uniform minimal non-odd-bipartite hypergraph with $k+1$ vertices by Theorem \ref{main}.
If taking $k=2$, then $C_3$ is the the unique minimal non-bipartite simple graph with $3$ vertices.
\end{exm}

\begin{exm}Example of non-regular minimal non-odd-bipartite hypergraph.
Let $G$ be a $4$-uniform hypergraph on vertices $1,\ldots,9$ with edges
$$\{1,2,3,4\}, \{1,2,4,5\}, \{1,3,6,7\}, \{1,5,8,9\}, \{6,7,8,9\}.$$
It is easy to verify that $G$ is non-regular minimal non-odd-bipartite by Theorem \ref{main}.
\end{exm}

\begin{rmk}
A minimal non-odd-bipartite hypergraph may contain cut edges. For example, the following $4$-uniform hypergraph $G$ with vertex set $[18]$ and $9$ edges:
$$
\begin{array}{lllll}
  \{1,3,4,5\}, & \{2,3,4,6\}, & \{5,7,8,9\}, & \{6,7,8,9\}, & \{1,2,10,11\},\\
  \{10,12,13,14\}, & \{11,12,13,15\}, & \{14,16,17,18\}, & \{15,16,17,18\}, & \\
  \end{array}
$$
where $\{1,2,10,11\}$ is a cut edge of $G$.
By Theorem \ref{main}, $G$ is minimal non-odd-bipartite.
\end{rmk}

\section{Least H-eigenvalue of minimal non-odd-bipartite hypergraphs}
 Let $G$ be a $k$-uniform minimal hypergraph.
 Let $x \in \C^{V(G)}$ whose entries are indexed by the vertices of $G$.
  For a subset $U$ of $V(G)$, denote $x^U:=\Pi_{v \in U} x_u$.
 Then we have
 \begin{equation}\label{form}
  \A(G)x^k = \sum_{e\in E(G)}kx^e,
\end{equation}

\begin{thm}\label{uppb}
Let $G$ be $k$-uniform minimal non-odd-bipartite hypergraph with $n$ vertices and $m$ edges, where $k$ is even.
Then
\begin{enumerate}
  \item $\lamin(G) \le -\rho(G)+\frac{2k}{n^{1/k}}$.

  \item $\lamin(G) \le -(1- {2 \over m})\rho(G)$.
\end{enumerate}
\end{thm}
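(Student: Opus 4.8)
The goal is to bound $\lamin(G)$ from above for a minimal non-odd-bipartite hypergraph. The natural tool is the variational characterization in Lemma \ref{semi}(2): since $k$ is even, $\lamin(G)=\min\{\A(G)x^k: \|x\|_k=1\}$, so \emph{any} test vector $x$ yields an upper bound $\lamin(G)\le \A(G)x^k$. By Eq. (\ref{form}), $\A(G)x^k=k\sum_{e\in E(G)}x^e$, so the entire problem reduces to choosing a clever sign-and-magnitude pattern on the vertices that makes this edge sum as negative as possible relative to $\rho(G)$.

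For part (1), the plan is to start from the Perron vector $y$ of $G$, which is positive, satisfies $\|y\|_k=1$, and gives $\A(G)y^k=\rho(G)>0$. I would perturb $y$ into a test vector $x$ by flipping the sign of $y$ on a single, carefully chosen vertex $v$ (or a small set). Flipping the sign at $v$ changes $x^e$ from $y^e$ to $-y^e$ exactly for the edges $e$ containing $v$, i.e. $d(v)$ edges. The resulting change in $k\sum_e x^e$ is $-2k\sum_{e\ni v}y^e$, so one gets $\A(G)x^k=\rho(G)-2k\sum_{e\ni v}y^e$. To make the subtracted term large I would invoke the eigenequation $\A(G)y^{k-1}=\rho(G)y^{[k-1]}$ evaluated at $v$, which by Eq. (\ref{form})-type bookkeeping relates $\sum_{e\ni v}y^e$ to $\rho(G)\,y_v^k$; and then the normalization $\sum_v y_v^k=1$ forces $\max_v y_v^k\ge 1/n$. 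Choosing $v$ to maximize $y_v$ should then produce a subtracted term of order $2k\,\rho(G)/n$ or, after the estimate is packaged, the stated $2k/n^{1/k}$. The main technical point will be to chase the factor of $k$ and the $n^{1/k}$ versus $n$ scaling correctly; I expect the pigeonhole step $\max_v y_v\ge n^{-1/k}$ combined with a bound like $\sum_{e\ni v}y^e\ge y_v\cdot(\text{something})$ to deliver the $n^{1/k}$ in the denominator rather than $n$.

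For part (2), I would again use a sign-flip test vector but now exploit the structural characterization from Theorem \ref{main} rather than the Perron vector's analytic properties. Since $G$ is minimal non-odd-bipartite, $m$ is odd and there is no proper edge subset with vanishing $\Z_2$-sum; equivalently $B_G$ has rank $m-1$ over $\Z_2$ (Remark \ref{m-1}). The idea is that any $\pm 1$ sign vector $s$ makes each edge product $s^e\in\{+1,-1\}$, and non-odd-bipartiteness means we can never make \emph{all} edges contribute $-1$ simultaneously; but minimality says we come as close as possible, so for a well-chosen sign pattern all but one edge can be made to have $s^e=-1$. Applying this to the Perron vector scaled by such signs, $x_v=s_v y_v$, gives $\A(G)x^k=k\sum_e s^e y^e=k\bigl(\sum_e y^e-2\sum_{e:\,s^e=-1}y^e\bigr)$; with exactly $m-1$ edges flipped negative this becomes roughly $\rho(G)\bigl(1-\tfrac{2(m-1)}{m}\bigr)$ after averaging arguments, yielding the bound $-(1-\tfrac{2}{m})\rho(G)$.

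The hard part will be part (2): justifying that one can flip the signs so that exactly $m-1$ of the $m$ edge products turn negative, and that the weighted sum $\sum_{s^e=-1}y^e$ is controllable by the total $\rho(G)/k$. The cleanest route is probably an averaging argument over a suitable family of sign vectors coming from the $(m-1)$-dimensional row space of $B_G$ over $\Z_2$, using that odd $m$ and $\rank B_G=m-1$ guarantee that over the relevant solution set each edge is negated on average a fraction $(m-1)/m$ of the time; linearity of expectation then produces a test vector achieving at least the average, and that average is exactly what gives the $2/m$ coefficient. Pinning down this counting—how many sign vectors negate a given edge and ensuring the all-positive configuration is excluded—will be the crux.
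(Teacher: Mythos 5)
Your part (2) has the right mechanism, but your part (1) rests on an idea that cannot work, and you have mislocated the difficulty in part (2).

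For part (1): flipping the Perron vector's sign at a single vertex $v$ (or a small set) negates only the $d(v)$ edges through $v$. Using the eigenequation as you suggest, $\sum_{e\ni v}y^e=\rho(G)\,y_v^k$, so your test vector gives $\A(G)x^k=\rho(G)\bigl(1-2k\,y_v^k\bigr)$. Even taking $v$ with $y_v^k\ge 1/n$ this is only $\le \rho(G)(1-2k/n)$, which stays \emph{positive} for $n>2k$ and is nowhere near $-\rho(G)+2k/n^{1/k}$. No local sign flip can work: to drive $\A(G)x^k$ down to about $-\rho(G)$ you must negate essentially \emph{every} edge term, and that is exactly where minimality enters. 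The paper's proof of (1) uses the same device you reserve for (2): pick an edge $\bar e$ containing a vertex $u$ with $x_u\le n^{-1/k}$ (pigeonhole on the \emph{smallest} entry, not the largest — you want the one surviving positive term to be small, since $x^{\bar e}\le x_u\le n^{-1/k}$ as all entries are at most $1$), take an odd-bipartition $\{U,U^c\}$ of the odd-bipartite hypergraph $G-\bar e$, and negate $x$ on $U^c$. Every edge other than $\bar e$ meets $U^c$ oddly and contributes $-kx^e$; the edge $\bar e$ must meet $U^c$ \emph{evenly} (otherwise $\{U,U^c\}$ would odd-bipartition all of $G$), so it contributes $+kx^{\bar e}$, giving $\lamin(G)\le -\rho(G)+2kx^{\bar e}\le -\rho(G)+2k/n^{1/k}$.

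For part (2): the step you call the crux — producing a sign vector with exactly $m-1$ edge products negative — is immediate from the definition of minimality by the argument just described; no averaging over the $\Z_2$ row space of $B_G$ is needed. The only averaging required is over the $m$ edges of $G$ to select $\hat e$ with $kx^{\hat e}\le \rho(G)/m$, after which the same computation gives $\lamin(G)\le -\rho(G)+2kx^{\hat e}\le -(1-\tfrac{2}{m})\rho(G)$. So you should discard the single-vertex perturbation in (1), apply your part-(2) construction to both parts, and replace the proposed counting over sign vectors by the direct use of an odd-bipartition of $G-e$ together with the appropriate choice of $e$ (small vertex weight for (1), small edge weight for (2)).
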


\begin{proof}
As $G$ is connected by Lemma \ref{conn-cut}, by Perron-Frobenius theorem, there exists a positive eigenvector $x$ of $\A(G)$ associated with the spectral radius $\rho(G)$.
We may assume $\|x\|_k=1$.
Then
\begin{equation}\label{rho}
 \rho(G)=\A(G)x^k=\sum_{e\in E(G)}kx^e.
 \end{equation}
Observe that there exists a vertex $u$ such that $x_u \le \frac{1}{n^{1/k}}$.
Let ${\bar e}$ be an edge of $G$ containing $u$.
Then $$x^{\bar e} = x_u \prod_{v \in {\bar e}, v \ne u}x_v \le \frac{1}{n^{1/k}}.$$
By the definition, $G-{\bar e}$ is odd-bipartite with an odd-bipartition $\{U,U^c\}$.
Now define a vector $y$ on the vertices of $G-{\bar e}$ such that $y_v =x_v$ if $v \in U$ and $y_v =-x_v$ otherwise.
Note that $\bar{e}$ intersects $U^c$ in an even number of vertices as $G$ is non-odd-bipartite, which implies that $y^{\bar{e}}=x^{\bar{e}}>0$.
By Lemma \ref{semi}(2) and Eq. (\ref{rho}),
$$ \lamin(G) \le \A(G)y^k =-\A(G)x^k + 2 k x^{\bar e} \le -\rho(G)+\frac{2k}{n^{1/k}}.$$

For the second result, from Eq. (\ref{rho}), there exists one edge ${\hat e}$ such that $kx^{\hat e}$ is not greater than the average of the summands
$kx^e$ over all $m$ edges $e$ of $G$, that is,
$$kx^{\hat e} \le \frac{\rho(G)}{m}.$$
Note  that $G-{\hat e}$ is also odd-bipartite with an odd-bipartition say $\{W,W^c\}$.
Now define a vector $z$ on the vertices of $G-{\hat e}$ such that $z_v =x_v$ if $v \in W$ and $z_v =-x_v$ otherwise.
By a similar discussion as the above, we have
$$ \lamin(G) \le A(G)z^k =-\A(G)x^k + 2 k x^{\hat e} \le -(1- {2 \over m})\rho(G).$$
\end{proof}

\begin{cor}\label{eps}
Let $k$ be a positive even integer.
For any $\ep>0$, for any $k$-uniform minimal non-odd-bipartite hypergraph $G$ with sufficiently larger number of vertices or edges,
\begin{enumerate}
  \item $-\rho(G)< \lamin(G) < -\rho(G) +\ep$,

  \item $-1< \lamin(G)/ \rho(G) < -1 +\ep$.
\end{enumerate}
\end{cor}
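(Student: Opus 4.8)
The plan is to read both statements off directly from Theorem \ref{uppb}, combined with the strict inequality $\lamin(G) > -\rho(G)$ recorded before Lemma \ref{ob-equiv}, after observing that for fixed uniformity $k$ the numbers of vertices and edges grow together. For the lower bounds, note that a minimal non-odd-bipartite $G$ is connected (Lemma \ref{conn-cut}) and non-odd-bipartite, so $\lamin(G) > -\rho(G)$; this is the left inequality of (1), and since $G$ has an edge we have $\rho(G) > 0$, so dividing through by $\rho(G)$ gives $\lamin(G)/\rho(G) > -1$, the left inequality of (2). Both hold for every such $G$ with no size hypothesis.

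For the upper bounds I would apply Theorem \ref{uppb}. Part (1) there gives $\lamin(G) \le -\rho(G) + 2k/n^{1/k}$; as $k$ is fixed, $2k/n^{1/k} \to 0$, so once $n$ is large enough that $2k/n^{1/k} < \ep$ we obtain $\lamin(G) < -\rho(G) + \ep$. Part (2) gives $\lamin(G) \le -(1 - 2/m)\rho(G)$, so dividing by $\rho(G) > 0$ yields $\lamin(G)/\rho(G) \le -1 + 2/m < -1 + \ep$ as soon as $m > 2/\ep$.

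Finally I would reconcile the phrase ``sufficiently large number of vertices or edges''. Counting incidences gives $mk = \sum_{v \in V(G)} d(v)$, and since each degree is even and at least $2$ (Theorem \ref{main}(4)) while $n \ge m$ (Corollary \ref{nm}(1)), we get $m \le n \le mk/2$; for fixed $k$ the quantities $n$ and $m$ are therefore comparable, and one is large exactly when the other is, so a single threshold on either forces both bounds at once. There is no substantial obstacle here: the argument is a short limit computation, and the only point needing care is to use $\rho(G) > 0$ when dividing the inequalities through.
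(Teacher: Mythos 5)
Your proposal is correct and follows exactly the route the paper intends: the strict lower bounds come from Lemma \ref{conn-cut} together with the remark preceding Lemma \ref{ob-equiv} (a connected non-odd-bipartite hypergraph has $\lamin(G) > -\rho(G)$), and the upper bounds are immediate from Theorem \ref{uppb} by letting $n$ or $m$ grow with $k$ fixed. Your additional observation that $m \le n \le mk/2$ (so that either size parameter being large forces the other to be large) is a worthwhile clarification of the phrase ``vertices or edges'' that the paper leaves implicit.
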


For a connected $k$-uniform hypergraph $G$, where $k$ is even, if we denote 
$$\alpha(G):=\rho(G)+\lamin(G), \;\beta(G):=-\lamin(G)/\rho(G),$$
then by Lemma \ref{ob-equiv}, $\alpha(G) \ge 0$, with equality if $G$ is odd-bipartite; and $0<\beta(G)\le 1$, with right equality if and only if $G$ is odd-bipartite.
So we can use $\alpha(G)$ and $\beta(G)$ to measure the non-odd-bipartiteness of an even uniform hypergraph.

Furthermore, by Theorem \ref{uppb} and Corollary \ref{eps}, 
if $G$ is minimal non-odd-bipartite hypergraph, then $\alpha(G) \to 0$ and $\beta(G) \to 1$ when the number of vertices or edges of $G$ goes to infinity.
So, the minimal non-odd-bipartite hypergraphs  are very close to be odd-bipartite in this sense.

\end{document}